\documentclass[11pt, reqno]{amsart}
\usepackage{amssymb,latexsym,amsmath,amsfonts,mathdots,enumitem,amsxtra}
\usepackage{latexsym}
\usepackage[mathscr]{eucal}
\usepackage{colortbl,xcolor}
\usepackage{lmodern}
\usepackage{sansmathaccent}
\usepackage[latin1]{inputenc}
\usepackage{tikz}
\usepackage{physics}
\usepackage{tikz-cd}
\usepackage{graphicx}
\usepackage{subcaption}
\usepackage{url}
\usetikzlibrary{shapes,arrows}
\usetikzlibrary{matrix,calc,shapes,arrows,positioning}
\pdfmapfile{+sansmathaccent.map}

\numberwithin{equation}{section}
\theoremstyle{plain}
\newtheorem{exam}{Example}

\newtheorem{thm}{Theorem}[section]

\newtheorem{lem}[thm]{Lemma}
\newtheorem{prop}[thm]{Proposition}
\theoremstyle{definition}

\newtheorem{rem}[thm]{Remark}

\numberwithin{equation}{section}

\def\beq{\begin{eqnarray}}
	\def\eeq{\end{eqnarray}}
\def\beqa{\begin{eqnarray*}}
	\def\eeqa{\end{eqnarray*}}

\def\beqn{\begin{equation}}
	\def\eeqn{\end{equation}}

\def\mg#1{}

\renewcommand{\epsilon}{\varepsilon}
\renewcommand{\phi}{\varphi}

\renewcommand{\bf}[1]{\textbf{#1}}
\renewcommand{\it}[1]{\textit{#1}}

\renewcommand{\sf}[1]{\textsf{#1}}

\numberwithin{equation}{section}
\allowdisplaybreaks[4] 

\setlist[enumerate]{font=\upshape,noitemsep, topsep=0pt} 
\setlist[itemize]{noitemsep, topsep=0pt}

\begin{document}
	
\title[Revisiting the Schwarz Lemma for the Pentablock]
{Revisiting the Schwarz Lemma for the Pentablock: Counterexamples and Sharp Interpolation Criteria}
\author{Shubhankar Mandal, Avijit Pal and Bhaskar Paul}
	
\address[S. Mandal]{Department of Mathematics, IIT Bhilai, 6th Lane Road, Jevra, Chhattisgarh 491002}
\email{S. Mandal:shubho0204@gmail.com}

\address[A. Pal]{Department of Mathematics, IIT Bhilai, 6th Lane Road, Jevra, Chhattisgarh 491002}
\email{A. Pal:avijit@iitbhilai.ac.in}

\address[B. Paul]{Department of Mathematics, IIT Bhilai, 6th Lane Road, Jevra, Chhattisgarh 491002}
\email{B. Paul:bhaskarpaul@iitbhilai.ac.in }
\subjclass[2020]{Primary 32F45; Secondary 30E05, 93B36}
\keywords{pentablock, Schwarz lemma, symmetrised bidisc, $\mu$-synthesis, rational inner functions}

\begin{abstract}
Alshehri and Lykova (\emph{J. Geom. Anal.}
\textbf{33} (2023)) formulated a Schwarz lemma
for the pentablock $P$, giving a criterion for
the existence of an analytic map from the unit
disc $\mathbb D$ into the closed pentablock
$\overline P$ that sends the origin to the origin
and a prescribed point $\lambda_0\in\mathbb D$
to a given point $x_0\in\overline P$.
We show that the sufficiency direction of their
criterion is false. We identify the
underlying error as the extension to the open
unit disc of a Fej\'er--Riesz identity that is
valid only on the unit circle.
We establish a corrected necessary and
sufficient interpolation criterion when the
last two components are prescribed by a fixed
rational $\Gamma$-inner function and resolve
the special case $s_0=0$. Using the Poisson
integral representation of the associated
outer spectral factor, we obtain a sharp
comparison between the corrected bound and
the bound proposed by Alshehri and Lykova,
with a precise characterization of equality.
We further express the logarithmic difference
between these bounds as a nonnegative Green
potential, thereby quantifying the discrepancy
in terms of the derivative of the first
coordinate of the prescribed $\Gamma$-inner
function.
Finally, we formulate a variational problem
over rational $\Gamma$-inner interpolants
satisfying the prescribed two-point conditions.
We show that the supremum of the associated
outer-factor quantities determines the
maximal admissible modulus of the first
coordinate among pentablock-valued
interpolants whose last two components belong
to this class. Equivalently, this extremal
problem reduces to minimizing the corresponding
Green potential. We conclude by discussing
the remaining problem of characterizing
unrestricted Schwarz interpolation for the
pentablock and its connection with the
Lempert function.
\end{abstract}
\maketitle

\section{Introduction}
\label{sec:intro}

The pentablock is the domain
\[
P=
\left\{
(a_{21},\operatorname{tr}A,\det A):
A\in\mathbb C^{2\times2},\ \|A\|<1
\right\}
\subset\mathbb C^3.
\]
It was introduced by Agler, Lykova, and Young
\cite{ALY15} in connection with a problem in
$\mu$-synthesis. Since then, the complex geometry
of the pentablock and its associated function
theory have attracted considerable attention.
In particular, the study of holomorphic mappings
into the pentablock is closely related to
Schwarz-type interpolation problems, invariant
distances, and extremal problems in several
complex variables.
The pentablock is naturally related to the
symmetrized bidisc
\[
\mathbb G
=
\left\{
(z_1+z_2,z_1z_2):
z_1,z_2\in\mathbb D
\right\},
\]
whose closure is denoted by $\Gamma$.
Indeed, the projection
\[
\pi:P\longrightarrow\mathbb G,
\qquad
\pi(a,s,p)=(s,p),
\]
exhibits the pentablock as a domain fibred
over the symmetrized bidisc. Consequently,
the geometry of the fibres and the function
theory of $\Gamma$ play an important role
in understanding holomorphic interpolation
on the pentablock.
A fundamental problem in this setting is
to characterize the existence of an analytic
map
\(
x=(a,s,p):
\mathbb D\longrightarrow\overline P
\)
satisfying the two-point interpolation
conditions
\[
x(0)=(0,0,0),
\quad
x(\lambda_0)=(a_0,s_0,p_0),
\]
where $\lambda_0\in\mathbb D\setminus\{0\}$
and $(a_0,s_0,p_0)\in\overline P$.
This problem is commonly referred to as
the Schwarz interpolation problem for
the pentablock. Its solution is closely
connected with the determination of the
Lempert function $\widetilde{k}_P(0,\cdot)$
and with the question of whether the
Lempert function coincides with the
Carath\'eodory distance on $P$.
Agler, Lykova, and Young \cite{ALY15}
raised this question and identified the
determination of a Schwarz lemma for the
pentablock as an important problem in
its function theory.
Alshehri and Lykova \cite{AL23} studied
rational maps from the unit disc into
the closed pentablock whose boundary
values lie in the distinguished boundary
of $P$. Such maps are called rational
$\overline P$-inner functions. Using the
structure of these functions, they
formulated a Schwarz lemma for the
pentablock and proposed an interpolation
criterion. More precisely, the criterion
appears as Theorem~1.5 of \cite{AL23}
and is reproduced as Theorem~11.3,
with related explicit formulas given
in Theorem~11.2. The latter numbering
refers to the version of \cite{AL23}.
The criterion combines conditions
arising from the Schwarz lemma for
the symmetrized bidisc with an
additional restriction on the first
coordinate of the interpolation target.
The sufficiency argument relies on
the structure of rational
$\Gamma$-inner functions and their
associated Fej\'er--Riesz
factorizations. This raises the
question of whether the boundary
identities involved in the
factorization provide the required
estimates at interior interpolation
points.
The first objective of the present
paper is to examine the validity
of the sufficiency direction of
the interpolation criterion in
\cite{AL23}. We show that this
direction is false. More precisely,
for every
$s_0\in\mathbb D\setminus\{0\}$,
we construct a boundary point
\(
x_0=(a_0,s_0,0)
\in\partial\overline P
\)
and a parameter
$\lambda_0\in\mathbb D\setminus\{0\}$
for which all the proposed
interpolation conditions are
satisfied, although no analytic
map
\(
x:\mathbb D\longrightarrow\overline P
\)
can satisfy
\[
x(0)=(0,0,0),
\quad
x(\lambda_0)=x_0.
\]
Thus the stated conditions are
not sufficient for unrestricted
Schwarz interpolation on the
pentablock.
We also identify the precise
source of the error in the
original argument. Let
\[
h=(s,p)
=
\left(
\frac{E}{D},
\frac{D^{\sim n}}{D}
\right)
\]
be a rational $\Gamma$-inner
function, and let $A$ be the
associated outer polynomial
obtained through Fej\'er--Riesz
factorization. The factorization
gives
\[
\left|
\frac{A(\zeta)}{D(\zeta)}
\right|^2
=
1-\frac14|s(\zeta)|^2,
\quad \zeta\in\mathbb T.
\]
However, this identity holds
on the unit circle and does
not, in general, extend to
the open unit disc. In
particular, it cannot be
evaluated directly at an
interior interpolation point
without further justification.
The use of this boundary
identity as an interior
identity is the step
responsible for the failure
of the claimed sufficiency.
Having identified this
obstruction, we establish
a corrected necessary and
sufficient interpolation
criterion when the last
two components are fixed
to be a prescribed rational
$\Gamma$-inner function.
More precisely, suppose that
\[
h(0)=(0,0),
\quad
h(\lambda_0)=(s_0,p_0).
\]
We prove that there exists
an analytic map
\(
x=(a,s,p):
\mathbb D\longrightarrow\overline P
\)
with prescribed last two
components $h$ and satisfying
\[
x(0)=(0,0,0),
\quad
x(\lambda_0)=(a_0,s_0,p_0)
\]
if and only if
\[
|a_0|
\leq
|\lambda_0|
\left|
\frac{A(\lambda_0)}
{D(\lambda_0)}
\right|.
\]
Moreover, whenever this
inequality holds, the
interpolant can be chosen
to be a rational
$\overline P$-inner function.
The necessity follows from
the boundary geometry of
the pentablock, the outer
factorization, the Smirnov
maximum principle, and the
classical Schwarz lemma.
The sufficiency is obtained
through a suitable finite
Blaschke product and the
rational pentablock-inner
construction of \cite{AL23}.
We also resolve the
interpolation problem in
the special case $s_0=0$.

The second objective of
this paper is to clarify
the relationship between
the corrected interpolation
bound and the bound
appearing in \cite{AL23}.
The Poisson integral formula
plays a central role in
this comparison. Since
$A/D$ is outer, its
interior modulus is
determined by its boundary
modulus through the
Poisson representation
\[
\begin{aligned}
2\log\left|
\frac{A(\lambda_0)}
{D(\lambda_0)}
\right|
=
\int_{\mathbb T}
P_{\lambda_0}(\zeta)
\log\left(
1-\frac14|s(\zeta)|^2
\right)
\,dm(\zeta),
\end{aligned}
\]
where $P_{\lambda_0}$
denotes the Poisson kernel
and $m$ is normalized
Lebesgue measure on
$\mathbb T$.
Using the superharmonicity
of
\[
\lambda\longmapsto
\log\left(
1-\frac14|s(\lambda)|^2
\right),
\]
we establish the sharp
comparison
\[
\left|
\frac{A(\lambda_0)}
{D(\lambda_0)}
\right|
\leq
\sqrt{1-\frac14|s_0|^2}.
\]
Equality holds if and only
if $s$ is constant.
Consequently, the
inequality is strict
whenever $s$ is
nonconstant. This
comparison explains
why the pointwise
bound proposed in
\cite{AL23} is generally
larger than the exact
outer-factor bound
arising from a fixed
rational $\Gamma$-inner
interpolant.

We further refine this
comparison by expressing
the difference between
the logarithms of the
two bounds as a Green
potential. More precisely,
we prove that
\[
\begin{aligned}
\frac12
\log\left(
1-\frac14|s_0|^2
\right)
-
\log\left|
\frac{A(\lambda_0)}
{D(\lambda_0)}
\right|
=
\frac{1}{4\pi}
\int_{\mathbb D}
G(\lambda_0,\lambda)
\frac{|s'(\lambda)|^2}
{\left(
1-\frac14|s(\lambda)|^2
\right)^2}
\,dA(\lambda),
\end{aligned}
\]
where $G$ is the Green
function of the unit
disc and $dA$ denotes
planar Lebesgue area
measure. This identity
provides an exact
quantitative description
of the discrepancy
between the two bounds.
In particular, the
difference is governed
by the Green potential
of a nonnegative
density determined
by the derivative
of $s$.

The third objective
is to formulate an
associated variational
interpolation problem.
For fixed
$\lambda_0\in
\mathbb D\setminus\{0\}$
and $(s_0,p_0)\in\Gamma$,
let
\[
\begin{aligned}
\mathcal H(\lambda_0;s_0,p_0)
=
\bigl\{
h=(s,p):
h\text{ is rational }
\Gamma\text{-inner},~h(0)=(0,0),\
h(\lambda_0)=(s_0,p_0)
\bigr\}.
\end{aligned}
\]
Whenever this class
is nonempty, define
\[
M(\lambda_0;s_0,p_0)
=
\sup_{h\in
\mathcal H(\lambda_0;s_0,p_0)}
\left|
\frac{A_h(\lambda_0)}
{D_h(\lambda_0)}
\right|,
\]
where $A_h$ and $D_h$
are the associated
Fej\'er--Riesz factors.
We show that
\(
|\lambda_0|
M(\lambda_0;s_0,p_0)
\)
is precisely the
supremum of the
admissible values
of $|a_0|$ among
analytic pentablock-valued
interpolants whose
last two components
belong to
$\mathcal H(\lambda_0;s_0,p_0)$.
Furthermore, the
Green-potential
representation gives
\[
\begin{aligned}
M(\lambda_0;s_0,p_0)
=
\sqrt{1-\frac14|s_0|^2}
\exp\left(
-\inf_{h=(s,p)\in
\mathcal H(\lambda_0;s_0,p_0)}
\mathcal G_s(\lambda_0)
\right).
\end{aligned}
\]
Thus the maximization
of the outer-factor
quantity is equivalent
to the minimization
of the corresponding
Green potential over
the admissible rational
$\Gamma$-inner class.
The variational
characterization
obtained in this
way is exact within
the prescribed class
of rational
$\Gamma$-inner
interpolants.
However, it does
not automatically
extend to arbitrary
analytic maps into
the pentablock,
since the last two
components of such
maps need not be
rational $\Gamma$-inner.
Accordingly, a
complete characterization
of unrestricted
Schwarz interpolation
on the pentablock
remains a separate
problem. Such a
characterization
would provide further
information about
the Lempert function
of the pentablock
and its relationship
with the Carath\'eodory
distance.

The paper is organized
as follows. In
Section~\ref{sec:counter},
we construct counterexamples
to the sufficiency
direction of the
criterion proposed
in \cite{AL23}.
In Section~\ref{sec:where},
we examine the original
argument and identify
the precise step
responsible for its
failure. In
Section~\ref{sec:corrected},
we establish the
corrected interpolation
criterion for a fixed
rational $\Gamma$-inner
pair and resolve the
special case $s_0=0$.
We then derive the
Poisson representation
of the associated
outer factor and
obtain a sharp
comparison with the
proposed bound.
This comparison is
subsequently refined
through a Green-potential
representation.
Finally, we formulate
the corresponding
variational problem
over rational
$\Gamma$-inner
interpolants and
discuss the remaining
questions concerning
unrestricted Schwarz
interpolation on
the pentablock.
\section{Preliminaries}
\label{sec:prelim}

Let \(\mathbb D\) denote the open unit disc and let
\(\mathbb T=\partial\mathbb D\) be the unit circle. The
\emph{symmetrized bidisc} (see \cite{JAY, JAgler, AglerM, AglerMY, AY01, costara}) is the domain
\[
\mathbb G
=
\{(z+w,zw):z,w\in\mathbb D\},
\]
and we denote its closure by \(\Gamma=\overline{\mathbb G}\).
If \((a,s,p)\in\overline P\), then \((s,p)\in\Gamma\), since the
eigenvalues of a matrix in the closed unit ball have modulus at most
one.
For \((s,p)\in\Gamma\), write
\[
(s,p)=(\lambda_1+\lambda_2,\lambda_1\lambda_2),
\qquad \lambda_1,\lambda_2\in\overline{\mathbb D}.
\]
Define
\[
F(s,p)
=
\frac12\left|1-\overline{\lambda_2}\lambda_1\right|
+
\frac12
\sqrt{1-|\lambda_1|^2}
\sqrt{1-|\lambda_2|^2}.
\]
The value of \(F(s,p)\) is independent of the labelling of
\(\lambda_1\) and \(\lambda_2\). For each \(z\in\mathbb D\), observe
that
\[
1-sz+pz^2=(1-\lambda_1z)(1-\lambda_2z)\neq0.
\]
Consequently, the function
\[
\Psi_z(a,s,p)
=
\frac{a(1-|z|^2)}{1-sz+pz^2}
\]
is well-defined for \((a,s,p)\in\mathbb C^3\) whenever
\((s,p)\in\Gamma\).
We recall the following characterization of the closed pentablock.
\begin{prop}[{\cite[Theorem~5.3]{ALY15}}]
\label{fact:membership}
Let \((s,p)\in\Gamma\) and \(a\in\mathbb C\). The following
conditions are equivalent:
\begin{enumerate}
    \item \((a,s,p)\in\overline P\);
    \item \(|a|\leq F(s,p)\);
    \item \(|\Psi_z(a,s,p)|\leq1\) for every \(z\in\mathbb D\).
\end{enumerate}
\end{prop}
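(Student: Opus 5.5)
Since $(a,s,p)\in\overline P$ forces $(s,p)\in\Gamma$, I would prove the equivalences by reducing to a $2\times 2$ matrix problem that is unitarily normalized. Fix $(s,p)=(\lambda_1+\lambda_2,\lambda_1\lambda_2)\in\Gamma$. By Schur's theorem, any $A$ with $\operatorname{tr}A=s$, $\det A=p$ is unitarily equivalent to an upper triangular matrix
\[
T=\begin{pmatrix}\lambda_1 & w\\ 0 & \lambda_2\end{pmatrix}.
\]
The $(2,1)$ entry of $U^*TU$ is then a specific expression in $w,\lambda_1,\lambda_2$ and the unitary $U$. The plan for (1)$\Leftrightarrow$(2) is to compute $\max |a_{21}|$ over all $A=U^*TU$ with $\|A\|\le1$; here I use that the closure of $P$ is obtained from matrices with $\|A\|\le 1$, by compactness and a scaling $rA$, $r\uparrow1$.

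\textbf{Maximizing the $(2,1)$ entry.} For a fixed unit vector $e_1=Ue_1'$, write $a_{21}=\langle Au,v\rangle$ with $u\perp v$ orthonormal. So the task is to maximize $|\langle Au,v\rangle|$ over orthonormal pairs and over contractive $A$ with eigenvalues $\lambda_1,\lambda_2$.
\begin{itemize}
\item The norm condition $\|T\|\le1$ is equivalent to $|w|^2\le(1-|\lambda_1|^2)(1-|\lambda_2|^2)$. This comes from $\det(I-T^*T)\ge0$ together with the diagonal entries being nonnegative.
\item With $|w|=\sqrt{(1-|\lambda_1|^2)(1-|\lambda_2|^2)}$ maximal, the off-diagonal entry of $U^*TU$ for $U$ a rotation parametrized on $\mathbb{CP}^1$ equals $c^2 w$ plus a term $(\lambda_2-\lambda_1)c\bar d$-type combination (with $|c|^2+|d|^2=1$) minus $d^2\bar{\ }$ corrections.
\item Optimizing over $(c,d)$ is a spectral-radius computation of a $2\times2$ matrix in the angle. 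It should yield $\frac12|\lambda_1-\lambda_2|\cdot(\ldots)$, which rearranges via $|1-\bar\lambda_2\lambda_1|^2-|\lambda_1-\lambda_2|^2=(1-|\lambda_1|^2)(1-|\lambda_2|^2)$ into $F(s,p)$.
\end{itemize}
Conversely, every $|a|\le F$ is attained, by scaling $w$ and rotating the phase via diagonal unitaries. This optimization is the main obstacle. First I would try to write $a_{21}$ as a numerical-range-type quantity: the maximal $|\langle Au,v\rangle|$ over orthonormal $u,v$ equals half the diameter-like quantity of the elliptical numerical range of $A$. That numerical range is an ellipse with foci $\lambda_1,\lambda_2$ and minor axis $|w|$, so the answer is $\tfrac12\sqrt{|\lambda_1-\lambda_2|^2+|w|^2}$-type. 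Substituting the maximal $|w|$ gives exactly $F$ after the identity above.

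\textbf{(2)$\Leftrightarrow$(3).} I would compute $\sup_{z\in\mathbb D}|\Psi_z(a,s,p)|=|a|\sup_z \frac{1-|z|^2}{|1-\lambda_1z||1-\lambda_2z|}$ and show this supremum equals $1/F(s,p)$. Each factor $\frac{\sqrt{1-|z|^2}}{|1-\lambda_iz|}$ is a Poisson-kernel-type quantity. The extremal problem can be solved by Möbius-invariance: conjugate so that $\lambda_1,\lambda_2$ are symmetric, $\pm t$ after a disc automorphism and rotation, and keep track of the invariant $\frac{|1-\bar\lambda_2\lambda_1|}{\sqrt{(1-|\lambda_1|^2)(1-|\lambda_2|^2)}}$. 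Then the maximum on the real axis is a one-variable calculus problem giving $2/(|1-\bar\lambda_2\lambda_1|+\sqrt{(1-|\lambda_1|^2)(1-|\lambda_2|^2)})$. Boundary cases where some $|\lambda_i|=1$ are handled by continuity, noting that both sides are continuous in $(\lambda_1,\lambda_2)\in\overline{\mathbb D}^2$ and that $F$ is symmetric.
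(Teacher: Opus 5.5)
\textbf{There is a genuine gap in (1)$\Leftrightarrow$(2), at the step you call the main obstacle.} The maximum of $|\langle Au,v\rangle|$ over orthonormal pairs is not half the diameter of the numerical range once $A$ is non-normal. For $A=\bigl(\begin{smallmatrix}0&1\\0&0\end{smallmatrix}\bigr)$ the numerical range is the disc of radius $\tfrac12$, yet conjugating by the coordinate swap gives $a_{21}=1=F(0,0)$.

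This propagates to your final claim. Your candidate $\tfrac12\sqrt{|\lambda_1-\lambda_2|^2+|w|^2}$, evaluated at $|w|^2=(1-|\lambda_1|^2)(1-|\lambda_2|^2)$, equals $\tfrac12|1-\bar\lambda_2\lambda_1|$ by the very identity you quote. It therefore misses the summand $\tfrac12\sqrt{(1-|\lambda_1|^2)(1-|\lambda_2|^2)}$ of $F(s,p)$, so the assertion that the substitution gives exactly $F$ is false. The argument as written would produce the bound $|a|\le\tfrac12|1-\bar\lambda_2\lambda_1|$, which $(1,0,0)\in\overline P$ refutes.

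The correct computation is short. For $U=\bigl(\begin{smallmatrix}c&-\bar d\\ d&\bar c\end{smallmatrix}\bigr)$ one gets $(U^*TU)_{21}=(\lambda_2-\lambda_1)cd-wd^2$, not $c^2w+\cdots$ as in your sketch. Its modulus is at most $|d|\bigl(|\lambda_1-\lambda_2||c|+|w||d|\bigr)$, with equality for a suitable relative phase of $c$ and $d$. Maximizing over $|c|^2+|d|^2=1$ gives $\tfrac12\bigl(|w|+\sqrt{|\lambda_1-\lambda_2|^2+|w|^2}\bigr)$. This is the sum of the two semi-axes of the numerical-range ellipse, equivalently $\|A-\tfrac{s}{2}I\|$, not the semi-major axis alone. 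It is increasing in $|w|$, and at the extremal $|w|$ your identity turns it into exactly $F(s,p)$. The converse then works as you intend: $d=0$ gives $a_{21}=0$, so by connectedness every modulus in $[0,F(s,p)]$ occurs, and diagonal unitaries adjust the argument.

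The paper gives no proof of this proposition; it is imported from \cite[Theorem~5.3]{ALY15}, so your argument must stand on its own. Your (2)$\Leftrightarrow$(3) computation is correct. The quantity $\frac{1-|z|^2}{|1-\lambda_1z||1-\lambda_2z|}$ is the product of the automorphism-invariant factors $\frac{\sqrt{(1-|z|^2)(1-|\lambda_i|^2)}}{|1-\lambda_iz|}$, divided by $\sqrt{(1-|\lambda_1|^2)(1-|\lambda_2|^2)}$. After moving $\bar\lambda_1,\bar\lambda_2$ to $\pm t$, the bound $|1-t^2z^2|\ge1-t^2|z|^2$ reduces it to a one-variable problem maximized at $z=0$. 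This gives $2/\bigl(|1-\bar\lambda_2\lambda_1|+\sqrt{(1-|\lambda_1|^2)(1-|\lambda_2|^2)}\bigr)=1/F(s,p)$.

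The only soft spot is the boundary case. Continuity does give (2)$\Rightarrow$(3) there, since for each fixed $z$ the inequality $F(s,p)\frac{1-|z|^2}{|1-\lambda_1z||1-\lambda_2z|}\le1$ passes to the limit. But the supremum over $z$ is only lower semicontinuous, so (3)$\Rightarrow$(2) needs a direct check. If $|\lambda_1|=1$, taking $z=r\bar\lambda_1$ gives $\frac{1+r}{|1-r\bar\lambda_1\lambda_2|}\to 2/|\lambda_1-\lambda_2|=1/F(s,p)$ as $r\to1$. When $\lambda_1=\lambda_2$ the limit is $+\infty$, which forces $a=0$.
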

The function \(F\) has a particularly simple expression on the
distinguished boundary of the symmetrized bidisc.
\begin{prop}
\label{fact:boundary}
If \((s,p)\in\Gamma\) and \(|p|=1\), then
\[
F(s,p)=\sqrt{1-\frac14|s|^2}.
\]
In particular, this formula holds for every
\((s,p)\in b\Gamma\), the distinguished boundary of \(\Gamma\).
\end{prop}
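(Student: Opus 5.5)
We need to prove that if $|p|=1$, with $(s,p)=(\lambda_1+\lambda_2,\lambda_1\lambda_2)\in\Gamma$, then $F(s,p)=\sqrt{1-\tfrac14|s|^2}$. The plan is a direct computation from the definition of $F$. From $|p|=|\lambda_1||\lambda_2|=1$ and $\lambda_1,\lambda_2\in\overline{\mathbb D}$, we get $|\lambda_1|=|\lambda_2|=1$. The second term in $F$ therefore vanishes, since $\sqrt{1-|\lambda_1|^2}=0$. This leaves
\[
F(s,p)=\tfrac12\left|1-\overline{\lambda_2}\lambda_1\right|.
\]

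The key step is to relate $|1-\overline{\lambda_2}\lambda_1|$ to $|s|=|\lambda_1+\lambda_2|$ using unimodularity. Since $|\lambda_2|=1$, we have $|1-\overline{\lambda_2}\lambda_1|=|\lambda_2-\lambda_1|$. The parallelogram law gives
\[
|\lambda_1-\lambda_2|^2+|\lambda_1+\lambda_2|^2=2|\lambda_1|^2+2|\lambda_2|^2=4,
\]
so $|\lambda_1-\lambda_2|^2=4-|s|^2$. Hence
\[
F(s,p)=\tfrac12\sqrt{4-|s|^2}=\sqrt{1-\tfrac14|s|^2},
\]
where the nonnegative square root is correct because $|s|\le 2$.

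For the final assertion, I would recall the standard description $b\Gamma=\{(z+w,zw):z,w\in\mathbb T\}$ of the distinguished boundary of $\Gamma$. Every point of $b\Gamma$ then satisfies $|p|=1$, so the formula applies there.

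There is no real obstacle. The only point needing care is that the formula does not depend on the labelling of $\lambda_1,\lambda_2$, and this is clear because $|\lambda_1-\lambda_2|$ is symmetric in the two roots.
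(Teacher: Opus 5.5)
Your proof is correct and follows essentially the paper's argument. Both reduce to $F(s,p)=\tfrac12|1-\overline{\lambda_2}\lambda_1|$ using $|\lambda_1|=|\lambda_2|=1$, then match this with $\sqrt{1-\tfrac14|s|^2}$ by direct computation: the paper expands $|\lambda_1+\lambda_2|^2$ through $\operatorname{Re}(\lambda_1\overline{\lambda_2})$, while you rewrite $|1-\overline{\lambda_2}\lambda_1|=|\lambda_1-\lambda_2|$ and apply the parallelogram law, which is the same identity presented differently.
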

\begin{proof}
Write
\(
s=\lambda_1+\lambda_2, p=\lambda_1\lambda_2,
\)
where \(\lambda_1,\lambda_2\in\overline{\mathbb D}\). Since
\(|p|=1\), we have
\(
|\lambda_1|=|\lambda_2|=1.
\)
It follows that
\[
F(s,p)=\frac12|1-\overline{\lambda_2}\lambda_1|.
\]
Moreover, we have
\[
\begin{aligned}
1-\frac14|s|^2
&=1-\frac14|\lambda_1+\lambda_2|^2\\
&=\frac12\bigl(1-\operatorname{Re}
  (\lambda_1\overline{\lambda_2})\bigr)\\
&=\frac14|1-\lambda_1\overline{\lambda_2}|^2.
\end{aligned}
\]
Taking square roots yields the desired identity.
\end{proof}
The next lemma identifies the supremum of the functions
\(\Psi_z\) in terms of \(F\).
\begin{lem}
\label{lem:sup}
Let \((s,p)\in\Gamma\) and suppose that \(F(s,p)>0\). Then, for
every \(a\in\mathbb C\),
\[
\sup_{z\in\mathbb D}|\Psi_z(a,s,p)|
=
\frac{|a|}{F(s,p)}.
\]
\end{lem}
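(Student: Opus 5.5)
The plan is to reduce the statement to Proposition~\ref{fact:membership} by exploiting the fact that $\Psi_z$ is linear in $a$; no explicit optimisation over $z$ should be needed. Fix $(s,p)\in\Gamma$ with $F(s,p)>0$ and set
\[
S=\sup_{z\in\mathbb D}\frac{1-|z|^2}{|1-sz+pz^2|}\in(0,\infty].
\]
Since $|\Psi_z(a,s,p)|=|a|\,(1-|z|^2)/|1-sz+pz^2|$, we have $\sup_{z}|\Psi_z(a,s,p)|=|a|\,S$ for every $a\in\mathbb C$, with the convention $0\cdot\infty=0$. It therefore suffices to prove $S=1/F(s,p)$. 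Note that $S\ge 1$, by taking $z=0$.

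\textbf{Finiteness of $S$.} Suppose $S=\infty$. Then for every $a\neq0$ we have $\sup_z|\Psi_z(a,s,p)|=\infty>1$, so condition (3) of Proposition~\ref{fact:membership} fails. Hence condition (2) fails as well, that is, $|a|>F(s,p)$ for every $a\neq 0$. This forces $F(s,p)=0$, contradicting the hypothesis. So $1\le S<\infty$.

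\textbf{The inequality $F\,S\le1$.} Take $a$ with $|a|=F(s,p)$. By the implication (2)$\Rightarrow$(3) of Proposition~\ref{fact:membership}, $|\Psi_z(a,s,p)|\le 1$ for all $z\in\mathbb D$. Taking the supremum over $z$ gives $F(s,p)\,S\le1$.

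\textbf{The inequality $F\,S\ge1$.} Take $a=1/S$. Then $\sup_z|\Psi_z(a,s,p)|=1$, so condition (3) holds. By (3)$\Rightarrow$(2), $1/S\le F(s,p)$.

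Combining the two inequalities gives $S=1/F(s,p)$, and hence $\sup_z|\Psi_z(a,s,p)|=|a|/F(s,p)$. The only delicate point is excluding $S=\infty$, which is exactly where the hypothesis $F(s,p)>0$ is used.
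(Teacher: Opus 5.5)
Your proof is correct and takes the same route as the paper. Linearity of $\Psi_z$ in $a$ reduces the claim to $S=\sup_{z}|\Psi_z(1,s,p)|$, and the equivalence (2)$\Leftrightarrow$(3) of Proposition~\ref{fact:membership} then forces $S=1/F(s,p)$; you just spell out the paper's one-line ``this equivalence implies'' as two explicit inequalities. Your separate finiteness step is harmless but redundant, since testing $a=F(s,p)>0$ in your second step already gives $S\le 1/F(s,p)<\infty$.
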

\begin{proof}
For fixed \((s,p)\), the function \(\Psi_z(a,s,p)\) is linear in
\(a\). Hence
\[
\sup_{z\in\mathbb D}|\Psi_z(a,s,p)|
=
|a|\sup_{z\in\mathbb D}|\Psi_z(1,s,p)|.
\]
Set
\(
S=\sup_{z\in\mathbb D}|\Psi_z(1,s,p)|.
\)
By Proposition~\ref{fact:membership}, for every \(a\in\mathbb C\),
\[
S|a|\leq1
\quad\Longleftrightarrow\quad
|a|\leq F(s,p).
\]
Since \(F(s,p)>0\), this equivalence implies
\(S=1/F(s,p)\), and the assertion follows.
\end{proof}
We next recall a necessary condition for an analytic map into the
closed pentablock that vanishes at the origin.
\begin{prop}[Necessary condition; cf.\ {\cite[Proposition~11.1]{ALY15}}]
\label{prop:necessary}
Let
\(
x=(a,s,p):\mathbb D\longrightarrow\overline P
\)
be analytic and suppose that
\(
x(0)=(0,0,0).
\)
Then
\(
|a(\lambda)|
\leq
|\lambda|F\bigl(s(\lambda),p(\lambda)\bigr),
 \lambda\in\mathbb D.
\)
\end{prop}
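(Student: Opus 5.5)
The plan is to use the characterization of the closed pentablock by the family of functions \(\Psi_z\), the Schwarz lemma, and Lemma~\ref{lem:sup}. Fix \(z\in\mathbb D\) and set
\[
g_z(\lambda)=\Psi_z\bigl(x(\lambda)\bigr)=\frac{a(\lambda)(1-|z|^2)}{1-s(\lambda)z+p(\lambda)z^2},\qquad \lambda\in\mathbb D.
\]
Since \((s(\lambda),p(\lambda))\in\Gamma\) for every \(\lambda\), the denominator factors as \((1-\lambda_1z)(1-\lambda_2z)\) with \(|\lambda_j|\le 1\). It therefore never vanishes, and \(g_z\) is analytic on \(\mathbb D\). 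By Proposition~\ref{fact:membership}, condition (3), we have \(|g_z(\lambda)|\le 1\) for all \(\lambda\). Moreover \(g_z(0)=0\) because \(a(0)=0\), while the denominator at \(0\) equals \(1\) since \(s(0)=p(0)=0\). The classical Schwarz lemma then gives \(|g_z(\lambda)|\le|\lambda|\) for every \(\lambda\in\mathbb D\) and every \(z\in\mathbb D\).

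Next, fix \(\lambda\) and take the supremum over \(z\):
\[
\sup_{z\in\mathbb D}\bigl|\Psi_z\bigl(a(\lambda),s(\lambda),p(\lambda)\bigr)\bigr|\le|\lambda|.
\]
If \(F(s(\lambda),p(\lambda))>0\), Lemma~\ref{lem:sup} identifies the left side with \(|a(\lambda)|/F(s(\lambda),p(\lambda))\), and the claimed inequality follows at once. This step carries the whole argument; the remaining work is only the degenerate case.

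The one point needing care is the case \(F(s(\lambda),p(\lambda))=0\). Here Proposition~\ref{fact:membership}, condition (2), gives \(|a(\lambda)|\le F=0\), since \(x(\lambda)\in\overline P\). Hence \(a(\lambda)=0\) and the inequality holds trivially. (In fact \(F=0\) forces \(\lambda_1=\lambda_2\) unimodular, but we do not need this.) No genuine obstacle arises; the only things to check are that \(\Psi_z\circ x\) is holomorphic and that it maps \(\mathbb D\) into \(\overline{\mathbb D}\) rather than \(\mathbb D\). The Schwarz lemma applies in either setting: a nonconstant map lands in \(\mathbb D\) by the maximum principle, and a constant map is identically \(0\).
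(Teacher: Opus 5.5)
Your proposal is correct and follows the paper's proof essentially step for step: apply the Schwarz lemma to \(g_z=\Psi_z\circ x\), take the supremum over \(z\) via Lemma~\ref{lem:sup} when \(F>0\), and use Proposition~\ref{fact:membership} to get \(a(\lambda)=0\) when \(F=0\). Your added checks, that the denominator equals \(1\) at \(\lambda=0\) and that the Schwarz lemma still applies when \(g_z\) takes values in \(\overline{\mathbb D}\), are valid details that the paper leaves implicit.
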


\begin{proof}
Fix \(z\in\mathbb D\) and define
\(
g_z=\Psi_z\circ x.
\)
Since
\[
1-s(\lambda)z+p(\lambda)z^2\neq0
\qquad (\lambda\in\mathbb D),
\]
the function \(g_z\) is analytic on \(\mathbb D\). By
Proposition~\ref{fact:membership},
\(
|g_z(\lambda)|\leq1, \lambda\in\mathbb D.
\)
Furthermore, \(g_z(0)=0\). The Schwarz lemma therefore gives
\(
|g_z(\lambda)|\leq|\lambda|,
 \lambda\in\mathbb D.
\)
Taking the supremum over \(z\in\mathbb D\) and applying
Lemma~\ref{lem:sup} yields
\[
\frac{|a(\lambda)|}
{F\bigl(s(\lambda),p(\lambda)\bigr)}
\leq|\lambda|
\]
whenever \(F(s(\lambda),p(\lambda))>0\). This proves the required
inequality in that case. If
\(F(s(\lambda),p(\lambda))=0\), then
Proposition~\ref{fact:membership} implies
\(a(\lambda)=0\), so the inequality holds trivially.
\end{proof}
Finally, we recall the Schwarz lemma for the symmetrized bidisc due
to Agler and Young \cite{AY01}. For \(|s_0|<2\), define
\[
\kappa(s_0,p_0)
=
\frac{
2|s_0-p_0\overline{s_0}|
+
|s_0^2-4p_0|
}{
4-|s_0|^2
}.
\]
The Agler--Young Schwarz lemma states that there exists an analytic
map
\(
h=(s,p):\mathbb D\longrightarrow\Gamma
\)
satisfying
\[
h(0)=(0,0),
\quad
h(\lambda_0)=(s_0,p_0)
\]
if and only if
\[
|s_0|<2
\qquad\text{and}\qquad
\kappa(s_0,p_0)\leq|\lambda_0|.
\]
We shall use this criterion when examining the interpolation conditions
for the pentablock.
\section{A counterexample}
\label{sec:counter}

We begin by recalling the Schwarz lemma for the pentablock stated by
Alshehri and Lykova.

\begin{thm}[{\cite[Theorem~1.5 ]{AL23}}]
\label{thm:AL}
Let \(\lambda_0\in\mathbb D\setminus\{0\}\) and
\(x_0=(a_0,s_0,p_0)\in\overline P\). The following statements are
equivalent:
\begin{enumerate}
    \item[(i)] There exists a rational \(\overline P\)-inner function
    \(x=(a,s,p)\) such that
    \[
    x(0)=(0,0,0),
    \quad x(\lambda_0)=x_0.
    \]

    \item[(ii)] There exists an analytic map
    \(x:\mathbb D\to\overline P\) satisfying
    \[
    x(0)=(0,0,0),
    \quad x(\lambda_0)=x_0,\quad\text{
    and}\quad
    |a_0|
    \leq |\lambda_0|
    \sqrt{1-\frac14|s_0|^2}.
    \]

    \item[(iii)] The inequalities
    \[
    |s_0|<2,\quad
    \kappa(s_0,p_0)\leq|\lambda_0|,
    \quad
    |a_0|
    \leq|\lambda_0|
    \sqrt{1-\frac14|s_0|^2}
    \]
    hold.
\end{enumerate}
\end{thm}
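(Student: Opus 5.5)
The statement, taken as written, is an equivalence, and I do not expect it to hold in full. My plan is to prove the two implications that follow from earlier results, (i)\(\Rightarrow\)(ii)\(\Rightarrow\)(iii), and then test the remaining direction (iii)\(\Rightarrow\)(ii) against the necessary condition of Proposition~\ref{prop:necessary}. I expect that test to produce a counterexample rather than a proof.

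\emph{Step 1: (i)\(\Rightarrow\)(ii).} A rational \(\overline P\)-inner function is in particular an analytic map \(\mathbb D\to\overline P\), so only the inequality on \(a_0\) needs proof. Proposition~\ref{prop:necessary} gives \(|a_0|\le|\lambda_0|F(s_0,p_0)\). It then suffices to show the pointwise bound
\[
F(s,p)\le\sqrt{1-\tfrac14|s|^2},\qquad (s,p)\in\Gamma .
\]
Write \(s=\lambda_1+\lambda_2\) and \(p=\lambda_1\lambda_2\). I would expand both sides in terms of \(\lambda_1,\lambda_2\), square, and reduce the claim to an elementary inequality in \(|\lambda_1|,|\lambda_2|\) and \(\operatorname{Re}(\lambda_1\overline{\lambda_2})\), using Cauchy--Schwarz on the two summands of \(F\). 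This computation is the only delicate point in the forward direction. If the bound fails somewhere, that would itself contradict (i)\(\Rightarrow\)(ii).

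\emph{Step 2: (ii)\(\Rightarrow\)(iii).} Compose the map with the projection \(\pi(a,s,p)=(s,p)\). This gives an analytic map \(\mathbb D\to\Gamma\) sending \(0\mapsto(0,0)\) and \(\lambda_0\mapsto(s_0,p_0)\). The Agler--Young Schwarz lemma then yields \(|s_0|<2\) and \(\kappa(s_0,p_0)\le|\lambda_0|\), and the inequality on \(a_0\) is carried over unchanged.

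\emph{Step 3: the converse, where I expect failure.} The obstruction is that (iii) bounds \(|a_0|\) by \(\sqrt{1-\frac14|s_0|^2}\), whereas Proposition~\ref{prop:necessary} forces the smaller bound \(|a_0|\le|\lambda_0|F(s_0,p_0)\).
\begin{itemize}
\item Take \(p_0=0\) and \(s_0\in\mathbb D\setminus\{0\}\), and set \(t=|s_0|^2\). Then \(F(s_0,0)=\tfrac12\bigl(1+\sqrt{1-t}\bigr)\).
\item Since \(2\sqrt{1-t}<2\), we get \(F(s_0,0)^2=\tfrac14\bigl(2-t+2\sqrt{1-t}\bigr)<1-\tfrac t4\).
\item Choose \(a_0\) with \(|a_0|=F(s_0,0)\). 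Then \(x_0=(a_0,s_0,0)\in\partial\overline P\) by Proposition~\ref{fact:membership}.
\item Choose \(|\lambda_0|<1\) close enough to \(1\) that both \(\kappa(s_0,0)\le|\lambda_0|\) and \(|\lambda_0|\sqrt{1-t/4}\ge F(s_0,0)\) hold. This is possible because \(\kappa(s_0,0)=(2|s_0|+|s_0|^2)/(4-|s_0|^2)<1\) and because of the strict inequality above.
\end{itemize}
With these choices every condition in (iii) holds. However, any analytic \(x\) with \(x(0)=0\) and \(x(\lambda_0)=x_0\) would give
\[
F(s_0,0)=|a_0|\le|\lambda_0|F(s_0,0)<F(s_0,0),
\]
which is impossible. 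So (iii)\(\Rightarrow\)(ii) fails, and hence so does (iii)\(\Rightarrow\)(i). The correct content is therefore the forward implications of Steps 1--2, together with a corrected sufficiency criterion in which \(\sqrt{1-\frac14|s_0|^2}\) is replaced by the interior quantity \(|A(\lambda_0)/D(\lambda_0)|\) for a fixed rational \(\Gamma\)-inner pair, as described in the introduction.
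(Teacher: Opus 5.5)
Your Step~3 is essentially the paper's own refutation, Proposition~\ref{prop:counter}. It uses the same data: $p_0=0$, $a_0=F(s_0,0)$, $\max\{\kappa_0,\rho\}\le|\lambda_0|<1$. The contradiction is likewise $|a_0|\le|\lambda_0|F(s_0,0)$ from Proposition~\ref{prop:necessary}, and the conclusion that (iii) implies neither (ii) nor (i) is exactly what the paper proves.

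Your Steps~1--2 go beyond the paper, which never proves the forward implications of Theorem~\ref{thm:AL}. The paper only points out that the argument for (i)$\Rightarrow$(iii) in \cite{AL23} misuses the Fej\'er--Riesz identity in the interior of the disc. It also compares $F$ with $\sqrt{1-\frac14|s|^2}$ only on the slice $p=0$. Your route, Proposition~\ref{prop:necessary} plus a global bound $F(s,p)\le\sqrt{1-\frac14|s|^2}$ on $\Gamma$, is correct and needs no outer-factor input.

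The bound is true, but the computation should be made precise. Put $X=|1-\overline{\lambda_2}\lambda_1|$ and $Y=\sqrt{(1-|\lambda_1|^2)(1-|\lambda_2|^2)}$, so that $2F=X+Y$. Then $4-|s|^2-4F^2=2\bigl(1-|\lambda_1\lambda_2|^2-XY\bigr)$, and the claim is equivalent to
\[
XY\le\bigl(1+|\lambda_1||\lambda_2|\bigr)\bigl(1-|\lambda_1||\lambda_2|\bigr)=1-|\lambda_1\lambda_2|^2 .
\]
This follows from two estimates:
\begin{enumerate}
\item $X\le1+|\lambda_1||\lambda_2|$, by the triangle inequality;
\item $Y\le1-|\lambda_1||\lambda_2|$, which is Cauchy--Schwarz for the unit vectors $(|\lambda_j|,\sqrt{1-|\lambda_j|^2})$, $j=1,2$.
\end{enumerate}
Applying Cauchy--Schwarz to the two summands of $F$ themselves, i.e.\ $(X+Y)^2\le2(X^2+Y^2)$, is too lossy. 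When $s=0\neq p$ it only gives $F^2\le1+|p|^2$, whereas in fact $F=1=\sqrt{1-\frac14|s|^2}$.

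The exact computation also shows that equality holds precisely when $s=0$ or $|p|=1$. Hence, taking $|a_0|=F(s_0,p_0)$, your argument refutes sufficiency at every $(s_0,p_0)\in\Gamma$ with $s_0\neq0$, $|p_0|<1$ and $\kappa(s_0,p_0)<1$, not only on the slice $p_0=0$.
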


We show that the sufficiency assertion in Theorem~\ref{thm:AL} is
false. More precisely, condition~(iii) does not imply either
condition~(i) or condition~(ii).

\begin{prop}
\label{prop:counter}
Let \(s_0\in\mathbb D\setminus\{0\}\), set \(p_0=0\), and define
\[
a_0=F(s_0,0)
=\frac12\left(1+\sqrt{1-|s_0|^2}\right).
\]
Put
\(
\kappa_0=\frac{|s_0|}{2-|s_0|},
\quad
\rho=
\frac{a_0}{\sqrt{1-\frac14|s_0|^2}}.
\)
Then \(0<\kappa_0<1\) and \(0<\rho<1\). For every
\(\lambda_0\in\mathbb D\) satisfying
\[
\max\{\kappa_0,\rho\}\leq|\lambda_0|<1,
\]
the point \(x_0=(a_0,s_0,0)\) belongs to \(\overline P\) and satisfies
condition~(iii) of Theorem~\ref{thm:AL}. Nevertheless, there is no
analytic map \(x:\mathbb D\to\overline P\) such that
\(
x(0)=(0,0,0), x(\lambda_0)=x_0.
\)
Moreover, condition~(iii) implies neither condition~(i) nor
condition~(ii).
\end{prop}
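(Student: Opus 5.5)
The plan is to verify the elementary numerical facts first, then check condition~(iii) directly, and finally obtain the non-existence from the necessary condition in Proposition~\ref{prop:necessary}. Write \(t=|s_0|\in(0,1)\) and \(c=\sqrt{1-t^2}\in(0,1)\).

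\emph{Step 1: the value of \(F\) and membership.} Take \(\lambda_1=s_0\) and \(\lambda_2=0\), so that \((s_0,0)=(\lambda_1+\lambda_2,\lambda_1\lambda_2)\). The definition of \(F\) then gives
\[
F(s_0,0)=\tfrac12+\tfrac12\sqrt{1-t^2}=\tfrac12(1+c),
\]
which confirms the stated formula for \(a_0\); note that \(a_0>0\). Since \(|s_0|<1\), we have \((s_0,0)\in\Gamma\), and \(|a_0|=F(s_0,0)\). Proposition~\ref{fact:membership} therefore gives \(x_0\in\overline P\).

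\emph{Step 2: the bounds on \(\kappa_0\) and \(\rho\).} With \(p_0=0\) we compute
\[
\kappa(s_0,0)=\frac{2t+t^2}{4-t^2}=\frac{t(2+t)}{(2-t)(2+t)}=\frac{t}{2-t}=\kappa_0 .
\]
Since \(0<t<1\), we get \(0<\kappa_0<1\). Clearly \(\rho>0\). For \(\rho<1\) it suffices to compare squares:
\[
a_0^2=\frac{(1+c)^2}{4}=\frac{2-t^2+2c}{4}<\frac{4-t^2}{4}=1-\tfrac14 t^2,
\]
and this holds because \(c<1\). Hence the interval \(\max\{\kappa_0,\rho\}\le|\lambda_0|<1\) is nonempty.

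\emph{Step 3: condition~(iii) holds.} For any such \(\lambda_0\), we have \(|s_0|<2\) and \(\kappa(s_0,0)=\kappa_0\le|\lambda_0|\). Moreover, \(\rho\le|\lambda_0|\) is exactly the inequality \(|a_0|\le|\lambda_0|\sqrt{1-\frac14|s_0|^2}\). So condition~(iii) is satisfied.

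\emph{Step 4: no interpolant exists.} Suppose an analytic \(x=(a,s,p):\mathbb D\to\overline P\) satisfied \(x(0)=(0,0,0)\) and \(x(\lambda_0)=x_0\). Proposition~\ref{prop:necessary} at \(\lambda=\lambda_0\) would give
\[
a_0=|a_0|\le|\lambda_0|\,F(s_0,0)=|\lambda_0|\,a_0 .
\]
Since \(a_0>0\), this forces \(|\lambda_0|\ge1\), a contradiction. Condition~(ii) explicitly requires such an analytic map, so it fails. A rational \(\overline P\)-inner function is in particular an analytic map \(\mathbb D\to\overline P\), so condition~(i) fails as well. Hence condition~(iii) implies neither (i) nor (ii).

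There is no serious obstacle. The only points needing care are the algebraic simplification of \(\kappa(s_0,0)\) and the strict inequality \(\rho<1\). The latter is where \(s_0\neq0\) is used: for \(s_0=0\) we would have \(c=1\) and hence \(\rho=1\).
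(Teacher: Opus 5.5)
Your proposal is correct and follows essentially the same route as the paper. You verify membership via \(\lambda_1=s_0,\ \lambda_2=0\), compute \(\kappa(s_0,0)=\kappa_0\), prove \(\rho<1\) by comparing squares, check condition~(iii), and derive the contradiction \(|\lambda_0|\ge1\) from Proposition~\ref{prop:necessary}. You also spell out explicitly why (i) and (ii) fail, which the paper leaves implicit.
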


\begin{proof}
We divide the proof into three steps.

\smallskip
\noindent
{\bf{\emph{Step 1: Membership in the closed pentablock:}}}
Write
\[
(s_0,0)=(\lambda_1+\lambda_2,\lambda_1\lambda_2),
\quad
\lambda_1=s_0,\quad \lambda_2=0.
\]
By the definition of \(F\),
\[
F(s_0,0)
=\frac12+\frac12\sqrt{1-|s_0|^2}
=a_0.
\]
Proposition~\ref{fact:membership} therefore gives
\(
x_0=(a_0,s_0,0)\in\overline P.
\)
Moreover, \(|a_0|=F(s_0,0)\), so \(x_0\) lies on the boundary
of \(\overline P\).

\smallskip
\noindent
{\bf{\emph{Step 2: Verification of condition~(iii):}}}
Since \(s_0\in\mathbb D\), we have \(|s_0|<2\). Furthermore,
\[
\begin{aligned}
\kappa(s_0,0)
&=\frac{2|s_0|+|s_0|^2}{4-|s_0|^2}\\
&=\frac{|s_0|}{2-|s_0|}
=\kappa_0<1.
\end{aligned}
\]
The last inequality in condition~(iii) is equivalent to
\(
|\lambda_0|\geq\rho.
\)
To verify that \(\rho<1\), put \(t=|s_0|^2\in(0,1)\). Then, we have
\[
a_0^2
=\frac14\left(2-t+2\sqrt{1-t}\right)
<\frac14(4-t)
=1-\frac t4,
\]
where the strict inequality follows from \(\sqrt{1-t}<1\).
Consequently, \(\rho<1\), and condition~(iii) holds whenever
\[
|\lambda_0|\geq\max\{\kappa_0,\rho\}.
\]

\smallskip
\noindent
{\bf{\emph{Step 3: Nonexistence of an interpolating map:}}}
Suppose, to the contrary, that an analytic map
\(x=(a,s,p):\mathbb D\to\overline P\) satisfies
\[
x(0)=(0,0,0),
\quad x(\lambda_0)=(a_0,s_0,0).
\]
Proposition~\ref{prop:necessary} gives
\[
|a_0|
\leq|\lambda_0|F(s_0,0)
=|\lambda_0|a_0.
\]
Since \(a_0>0\), this implies \(|\lambda_0|\geq1\), contradicting
\(\lambda_0\in\mathbb D\). Hence no such analytic map exists.
\end{proof}

\begin{exam}
\label{rem:instance}
Take \(s_0=\frac12\). Then, we get
\[
a_0=\frac{2+\sqrt3}{4}\approx0.9330,\quad
\kappa_0=\frac13,\quad
\rho=\frac{2+\sqrt3}{\sqrt{15}}\approx0.9636.
\]
Thus condition~(iii) holds for \(\lambda_0=0.99\).
In this case, the supremum in Lemma~\ref{lem:sup} is attained at
\(z_0=2-\sqrt3\). Since
\[
1-z_0^2=4\sqrt3-6,
\qquad
1-\frac{z_0}{2}=\frac{\sqrt3}{2},
\]
we obtain
\[
\begin{aligned}
\Psi_{z_0}(x_0)
&=a_0\frac{1-z_0^2}{1-z_0/2}\\
&=a_0(8-4\sqrt3)=1.
\end{aligned}
\]
Consequently, if an interpolating map existed, then
\(g_{z_0}=\Psi_{z_0}\circ x\) would be a Schur function satisfying
\(g_{z_0}(0)=0\) and \(g_{z_0}(\lambda_0)=1\), contrary to the
Schwarz lemma. This is the same obstruction supplied by the necessary
condition in Proposition~\ref{prop:necessary}
\cite[Proposition~11.1]{AL23}.
\end{exam}

\section{Where the argument breaks}
\label{sec:where}
We now identify the error in the proof of the proposed Schwarz lemma.
Let
\[
h=(s,p)=\left(\frac ED,\frac{D^{\sim n}}D\right)
\]
be a rational \(\Gamma\)-inner function of degree \(n\), represented
as in \cite[Theorem~7.5]{AL23}. Thus \(E\) is \(n\)-symmetric,
\(D\) has no zeros on \(\overline{\mathbb D}\), and
\[
D^{\sim n}(\lambda)
=\lambda^n\overline{D(1/\overline\lambda)}.
\]
Suppose also that \(|E|\leq2|D|\) on \(\overline{\mathbb D}\).
By the Fej\'er--Riesz theorem, there is an outer polynomial \(A\)
such that
\[
|A|^2=|D|^2-\frac14|E|^2
\quad\text{on }\mathbb T.
\]
Furthermore, by \cite[Theorem~7.8]{AL23}, for every finite Blaschke
product \(B\), the function
\[
\left(\frac{BA}{D},\frac ED,\frac{D^{\sim n}}D\right)
\]
is a rational \(\overline P\)-inner function. This part of the
argument is valid: the proof applies the maximum principle to
\(\Psi_z\) and uses the boundary values on \(\mathbb T\).

The difficulty arises in the subsequent interpolation argument. In
the proofs of \((ii)\Rightarrow(i)\) and \((iii)\Rightarrow(i)\) in
Theorem~11.3, as well as in case~(ii) of Theorem~11.2, the value
\[
\left|\frac{A(\lambda_0)}{D(\lambda_0)}\right|
=\sqrt{1-\frac14|s_0|^2}
\]
is used to choose a finite Blaschke product \(B\) so that the first
component takes the prescribed value \(a_0\). The proof of
\((i)\Rightarrow(iii)\) likewise uses
\[
\left|\frac{A}{D}\right|^2
=1-\frac14|s|^2
\]
throughout \(\mathbb D\). However, the identity
\[
|A|^2=|D|^2-\frac14|E|^2
\]
is guaranteed only on \(\mathbb T\). In the interior of the disc,
\(|A/D|\) is the modulus of an outer function with prescribed boundary
modulus; the boundary identity does not imply the corresponding
pointwise identity throughout \(\mathbb D\).

The following lemma gives the relevant interior estimate.

\begin{lem}
\label{lem:AD}
In the setting above,
\[
\left|\frac{A(\lambda)}{D(\lambda)}\right|
\leq F\bigl(s(\lambda),p(\lambda)\bigr),
\quad \lambda\in\mathbb D.
\]
\end{lem}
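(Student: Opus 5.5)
The plan is to use the pentablock-valued map already supplied by \cite[Theorem~7.8]{AL23} and feed it into the Schwarz-type machinery of Section~\ref{sec:prelim}, but without the vanishing at the origin. Take the trivial Blaschke product $B\equiv 1$. Then
\[
x=\left(\frac AD,\ \frac ED,\ \frac{D^{\sim n}}D\right)
\]
is a rational $\overline P$-inner function, so in particular $x(\lambda)\in\overline P$ for every $\lambda\in\mathbb D$. Proposition~\ref{fact:membership}, implication (1)$\Rightarrow$(2), applied at the point $x(\lambda)=(A(\lambda)/D(\lambda),s(\lambda),p(\lambda))$, gives at once
\[
\left|\frac{A(\lambda)}{D(\lambda)}\right|\le F\bigl(s(\lambda),p(\lambda)\bigr),
\]
which is the claim. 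So the proof is essentially a one-liner, provided we are allowed to cite \cite[Theorem~7.8]{AL23}. The paper explicitly endorses that theorem as valid, since its proof only uses the maximum principle on $\Psi_z$ and boundary values.

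To keep the lemma self-contained, I would also sketch why $x(\lambda)\in\overline P$, i.e.\ verify condition (3) of Proposition~\ref{fact:membership} directly. Fix $z\in\mathbb D$ and set
\[
g_z(\lambda)=\Psi_z(x(\lambda))=\frac{A(\lambda)(1-|z|^2)}{D(\lambda)-E(\lambda)z+D^{\sim n}(\lambda)z^2}.
\]
The denominator equals $D(\lambda)\bigl(1-s(\lambda)z+p(\lambda)z^2\bigr)$. It does not vanish on $\mathbb D$, because $(s,p)$ maps into $\Gamma$ and $D$ has no zeros on $\overline{\mathbb D}$. Hence $g_z$ is holomorphic on $\mathbb D$, and being rational, it extends continuously to $\overline{\mathbb D}$ away from possible boundary poles. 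I expect this boundary behaviour to be the main technical point: on $\mathbb T$ one may have $1-sz+pz^2$ close to zero. The issue disappears because for fixed $z\in\mathbb D$, $(1-\lambda_1z)(1-\lambda_2z)$ is bounded below by $(1-|z|)^2$ whenever $|\lambda_i|\le1$, so $g_z$ is bounded and continuous up to $\mathbb T$.

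On $\mathbb T$ we have $|p|=1$ and $|A/D|^2=1-\frac14|s|^2=F(s,p)^2$ by Proposition~\ref{fact:boundary}. Hence by Lemma~\ref{lem:sup} (or Proposition~\ref{fact:membership} (2)$\Rightarrow$(3) at boundary points), $|g_z(\zeta)|\le1$ for $\zeta\in\mathbb T$. The maximum principle then yields $|g_z|\le1$ on $\mathbb D$ for every $z$. Proposition~\ref{fact:membership} (3)$\Rightarrow$(2) converts this into the stated inequality. The degenerate case $F(s(\lambda),p(\lambda))=0$ needs no separate handling, since (3)$\Rightarrow$(2) already forces $A(\lambda)=0$.
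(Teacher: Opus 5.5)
Your argument is correct and is exactly the paper's proof: take $B\equiv1$ in Theorem~7.8 of Alshehri--Lykova, so that $(A/D,E/D,D^{\sim n}/D)$ is $\overline P$-valued, and then apply Proposition~\ref{fact:membership}. Your additional self-contained check is also sound; it uses the bound $|1-sz+pz^2|\ge(1-|z|)^2$ on $\overline{\mathbb D}$, the boundary identity via Proposition~\ref{fact:boundary}, and the maximum principle for $\Psi_z\circ x$, and it simply reproduces the argument behind that cited theorem.
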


\begin{proof}
By \cite[Theorem~7.8]{AL23}, with \(B\equiv1\), the map
\[
\left(\frac AD,\frac ED,\frac{D^{\sim n}}D\right)
\]
takes values in \(\overline P\). Applying Proposition~\ref{fact:membership}
gives
\[
\left|\frac{A(\lambda)}{D(\lambda)}\right|
\leq
F\left(\frac{E(\lambda)}{D(\lambda)},
\frac{D^{\sim n}(\lambda)}{D(\lambda)}\right),
\]
as required.
\end{proof}

\begin{exam}
\label{ex:AD}
Consider
\(
(s,p)=(1+\lambda,\lambda).
\)
Here \(n=1\), \(E(\lambda)=1+\lambda\), \(D(\lambda)=1\), and
\(A(\lambda)=(1-\lambda)/2\). The map \(h\) is \(\Gamma\)-inner,
since it is obtained from the pair \(z=1\), \(w=\lambda\). The
polynomial \(A\) is outer, and on \(\mathbb T\),
\[
|A(\lambda)|^2
=\frac14|1-\lambda|^2
=1-\frac14|1+\lambda|^2.
\]
For \(\lambda\in\mathbb D\), however,
\[
\begin{aligned}
|A(\lambda)|^2
-\left(1-\frac14|s(\lambda)|^2\right)
&=\frac14|1-\lambda|^2
 -1+\frac14|1+\lambda|^2\\
&=-\frac12(1-|\lambda|^2)<0.
\end{aligned}
\]
In fact,
\[
F(s(\lambda),p(\lambda))
=\frac12|1-\lambda|
=\left|\frac{A(\lambda)}{D(\lambda)}\right|,
\]
so \((A/D,s,p)\) takes values on the boundary of \(\overline P\).
At \(\lambda=0\), for example,
\[
|A(0)/D(0)|^2=\frac14,
\quad
1-\frac14|s(0)|^2=\frac34.
\]
This example explicitly demonstrates why the boundary identity cannot
be used as a pointwise identity in the disc.
\end{exam}
It follows that, for a fixed \(\Gamma\)-inner interpolant \(h\), the
relevant quantity at \(\lambda_0\) is
\(
\left|\frac{A(\lambda_0)}{D(\lambda_0)}\right|,
\)
which is bounded above by \(F(s_0,p_0)\) by Lemma~\ref{lem:AD}.
It is not, in general, equal to
\(\sqrt{1-\frac14|s_0|^2}\). The counterexample in
Proposition~\ref{prop:counter} exploits the strict inequality
\[
F(s_0,0)
<
\sqrt{1-\frac14|s_0|^2},
\quad 0<|s_0|<1.
\]

\section{A corrected criterion for a fixed \(\Gamma\)-inner
interpolant}
\label{sec:corrected}
We now formulate a corrected criterion when the last two components
are fixed to be a prescribed rational \(\Gamma\)-inner function.
\begin{prop}
\label{prop:corrected}
Let
\(
h=(s,p)=
\left(\frac{E}{D},\frac{D^{\sim n}}{D}\right)
\)
be a rational $\Gamma$-inner function satisfying
\[
h(0)=(0,0),
\quad
h(\lambda_0)=(s_0,p_0),
\]
where $\lambda_0\in\mathbb D\setminus\{0\}$.
Let $A$ be the outer polynomial associated with $h$
through the Fej\'er--Riesz factorization described
in Section~\ref{sec:where}, so that
\[
\left|\frac{A(\zeta)}{D(\zeta)}\right|^2
=
1-\frac14|s(\zeta)|^2,
\quad \zeta\in\mathbb T.
\]
Then, for $a_0\in\mathbb C$, the following
statements are equivalent:
\begin{enumerate}
\item[\textup{(i)}]
There exists a rational $\overline P$-inner
function $x=(a,s,p)$, whose last two components
are given by $h$, such that
\[
x(0)=(0,0,0),
\qquad
x(\lambda_0)=(a_0,s_0,p_0).
\]

\item[\textup{(ii)}]
There exists an analytic map
\(
x=(a,s,p):\mathbb D\longrightarrow\overline P,
\)
whose last two components are given by $h$,
such that
\[
x(0)=(0,0,0),
\qquad
x(\lambda_0)=(a_0,s_0,p_0).
\]

\item[\textup{(iii)}]
The inequality
\[
|a_0|
\leq
|\lambda_0|
\left|\frac{A(\lambda_0)}{D(\lambda_0)}\right|
\]
holds.
\end{enumerate}
\end{prop}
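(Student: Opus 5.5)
We will prove (i)$\Rightarrow$(ii)$\Rightarrow$(iii)$\Rightarrow$(i). The first implication is immediate, since a rational $\overline P$-inner function is in particular an analytic map $\mathbb D\to\overline P$.

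\emph{(ii)$\Rightarrow$(iii).} Let $x=(a,s,p)$ be as in (ii). The key idea is to compare $a$ with the outer function $A/D$. Since $D$ has no zeros on $\overline{\mathbb D}$ and $A$ is outer, we may assume $A$ has no zeros in $\mathbb D$ (an outer polynomial). When $A$ has zeros on $\mathbb T$ we work with $A/D$ as an outer function in $H^\infty$. Consider $g=a/(A/D)=aD/A$, which is analytic in $\mathbb D$ and lies in the Smirnov class $N^+$, being a bounded function divided by an outer function.

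Next we estimate boundary values. At a.e.\ $\zeta\in\mathbb T$ we have $p(\zeta)\in\mathbb T$, since $h$ is $\Gamma$-inner. The radial limit of $x$ lies in $\overline P$, a closed set. By Proposition~\ref{fact:membership} and Proposition~\ref{fact:boundary},
\[
|a(\zeta)|\le F(s(\zeta),p(\zeta))=\sqrt{1-\tfrac14|s(\zeta)|^2}=|A(\zeta)/D(\zeta)|.
\]
Hence $|g|\le1$ a.e.\ on $\mathbb T$. The Smirnov maximum principle then gives $|g|\le1$ on $\mathbb D$. Moreover $g(0)=0$, because $a(0)=0$ and $A(0)\ne0$. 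The Schwarz lemma yields $|g(\lambda_0)|\le|\lambda_0|$, which is exactly (iii).

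The main obstacle is this step: justifying $g\in N^+$ and handling possible zeros of $A$ on $\mathbb T$, where $|s|=2$. Such zeros do not affect outerness. Their only effect is that $g$ may be unbounded a priori, which is precisely why the Smirnov maximum principle, rather than the ordinary maximum principle, is needed.

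\emph{(iii)$\Rightarrow$(i).} Set $c=|A(\lambda_0)/D(\lambda_0)|$. If $c=0$, then (iii) forces $a_0=0$; take $B(\lambda)=\lambda$, so that $a=\lambda A/D$ vanishes at both points. If $c>0$, let $w=a_0 D(\lambda_0)/(\lambda_0A(\lambda_0))$, so that $|w|\le1$. If $|w|=1$, put $B(\lambda)=w\lambda$. If $|w|<1$, put $B(\lambda)=\lambda\,\varphi(\lambda)$, where $\varphi$ is a finite Blaschke product with $\varphi(\lambda_0)=w$; for instance $\varphi$ can be the constant-free disc automorphism $\varphi(\lambda)=(\lambda+\mu)/(1+\bar\mu\lambda)$, with $\mu$ chosen so that $\varphi(\lambda_0)=w$ (always possible, since automorphisms act transitively). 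In every case $B$ is a finite Blaschke product with $B(0)=0$ and $B(\lambda_0)=a_0D(\lambda_0)/A(\lambda_0)$. By \cite[Theorem~7.8]{AL23}, $x=(BA/D,s,p)$ is rational $\overline P$-inner. It satisfies $x(0)=(0,0,0)$ and $x(\lambda_0)=(a_0,s_0,p_0)$, which proves (i).
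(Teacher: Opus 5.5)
Your proposal is correct and follows the paper's proof essentially step for step: for (ii)$\Rightarrow$(iii) you use the Smirnov-class quotient $aD/A$, the a.e.\ boundary bound from Propositions~\ref{fact:membership} and~\ref{fact:boundary}, the Smirnov maximum principle and the Schwarz lemma, and for (iii)$\Rightarrow$(i) a finite Blaschke product $B$ with $B(0)=0$, $B(\lambda_0)=a_0D(\lambda_0)/A(\lambda_0)$ inserted into \cite[Theorem~7.8]{AL23}. Two cosmetic points: the case $c=0$ cannot occur because an outer polynomial has no zeros in $\mathbb D$, and transitivity of $\operatorname{Aut}(\mathbb D)$ does not by itself show that a rotation-free map $(\lambda+\mu)/(1+\bar\mu\lambda)$ sends $\lambda_0$ to $w$ (it does, with $\mu=\frac{w(1-|\lambda_0|^2)-\lambda_0(1-|w|^2)}{1-|w\lambda_0|^2}\in\mathbb D$, but the paper's choice $\beta_w^{-1}\circ\beta_{\lambda_0}$ avoids this check).
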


\begin{proof}
Put
\(
F=\frac{A}{D}.
\)
Since $A$ is outer and $D$ is zero-free on
$\overline{\mathbb D}$, the function $F$ is
outer in $H^\infty(\mathbb D)$ and has no
zeros in $\mathbb D$. In particular,
$F(0)\neq0$ and $F(\lambda_0)\neq0$.

We prove the implications
\[
\textup{(i)}\Longrightarrow\textup{(ii)}
\Longrightarrow\textup{(iii)}
\Longrightarrow\textup{(i)}.
\]

The implication \textup{(i)}$\Rightarrow$\textup{(ii)}
is immediate from the definition of a rational
$\overline P$-inner function.

Suppose that \textup{(ii)} holds. Since
$x=(a,s,p)$ is an analytic map from
$\mathbb D$ into the bounded domain
$\overline P$, we have $a\in H^\infty(\mathbb D)$.
Furthermore, since $h=(s,p)$ is rational
$\Gamma$-inner, its boundary values belong
to the distinguished boundary $b\Gamma$
almost everywhere on $\mathbb T$.
By Propositions~\ref{fact:membership}
and~\ref{fact:boundary}, it follows that
\[
|a^*(\zeta)|^2
\leq
1-\frac14|s(\zeta)|^2
=
|F(\zeta)|^2
\]
for almost every $\zeta\in\mathbb T$,
where $a^*$ denotes the radial boundary
function of $a$.
Define
\[
f=\frac{a}{F}=\frac{aD}{A}.
\]
As $F$ is outer and $a\in H^\infty(\mathbb D)$,
we have $f\in N^+(\mathbb D)$, where
$N^+(\mathbb D)$ denotes the Smirnov class.
Moreover, the preceding boundary inequality
implies
\(
|f^*(\zeta)|\leq1
\)
for almost every $\zeta\in\mathbb T$.
The Smirnov maximum principle
\cite{Dur70} therefore yields
\[
f\in H^\infty(\mathbb D),
\quad
\|f\|_\infty\leq1.
\]
Since $a(0)=0$ and $F(0)\neq0$, we obtain
$f(0)=0$. Hence, by the Schwarz lemma,
\[
|f(\lambda_0)|\leq|\lambda_0|.
\]
Consequently, we have
\[
\begin{aligned}
|a_0|
&=|a(\lambda_0)|\\
&=|F(\lambda_0)f(\lambda_0)|\\
&\leq|\lambda_0||F(\lambda_0)|\\
&=
|\lambda_0|
\left|\frac{A(\lambda_0)}{D(\lambda_0)}\right|.
\end{aligned}
\]
Thus \textup{(iii)} holds.

Suppose that \textup{(iii)} holds.
Set
\[
q=\frac{A(\lambda_0)}{D(\lambda_0)},
\quad
r=\frac{a_0}{\lambda_0q}.
\]
Since $q\neq0$ and $\lambda_0\neq0$, the
quantity $r$ is well defined, and the
assumed inequality gives $|r|\leq1$.
We construct a finite Blaschke product $B$
such that
\[
B(0)=0,
\quad
B(\lambda_0)=\frac{a_0}{q}.
\]
If $|r|=1$, take $B(z)=rz$. If $|r|<1$,
let
\[
\beta_\alpha(z)
=
\frac{z-\alpha}{1-\overline\alpha z},
\quad \alpha\in\mathbb D,
\]
and define
\[
B(z)
=
z\bigl(\beta_r^{-1}
\circ\beta_{\lambda_0}\bigr)(z).
\]
In either case, $B$ is a finite Blaschke
product satisfying
\[
B(0)=0,
\quad
B(\lambda_0)=\lambda_0r
=\frac{a_0}{q}.
\]
Now define
\[
x(z)=
\left(
\frac{B(z)A(z)}{D(z)},
\frac{E(z)}{D(z)},
\frac{D^{\sim n}(z)}{D(z)}
\right),
\quad z\in\mathbb D.
\]
By the rational pentablock-inner
construction of \cite[Theorem~7.8]{AL23},
the function $x$ is rational
$\overline P$-inner.
Since $B(0)=0$ and $h(0)=(0,0)$,
we have
\(
x(0)=(0,0,0).
\)
Moreover,
\[
\begin{aligned}
a(\lambda_0)
&=
B(\lambda_0)
\frac{A(\lambda_0)}{D(\lambda_0)}\\
&=
\frac{a_0}{q}\,q
=a_0,
\end{aligned}
\]
while
\(
(s(\lambda_0),p(\lambda_0))
=(s_0,p_0).
\)
Therefore, we get
\(
x(\lambda_0)=(a_0,s_0,p_0),
\)
which proves \textup{(i)} and completes
the proof.
\end{proof}

\begin{rem}
\label{rem:fixed-gamma-inner}
Proposition~\ref{prop:corrected} gives a
necessary and sufficient condition for the
existence of an analytic pentablock-valued
interpolant when its last two components
are prescribed by a rational $\Gamma$-inner
function. In particular, whenever such an
analytic interpolant exists, one may choose
a rational $\overline P$-inner interpolant
with the same prescribed data.
The essential observation is that the
Fej\'er--Riesz identity
\[
\left|\frac{A(\zeta)}{D(\zeta)}\right|^2
=
1-\frac14|s(\zeta)|^2
\]
is used only on the unit circle.
The outer property of $A/D$, together
with the Smirnov maximum principle,
then gives the factorization
\[
a=\frac{A}{D}f,
\quad
f\in H^\infty(\mathbb D),
\quad
\|f\|_\infty\leq1.
\]
Consequently, the interpolation problem
for the first component reduces to a
classical Schur-class interpolation problem,
and the required condition follows from
the Schwarz lemma.
\end{rem}
For a fixed rational \(\Gamma\)-inner interpolant \(h\),
the interpolation problem with prescribed last two components is
solvable if and only if
\[
|a_0|
\leq
|\lambda_0|
\left|\frac{A_h(\lambda_0)}{D_h(\lambda_0)}\right|.
\]
The remaining difficulty is to determine the supremum of
\(
\left|\frac{A_h(\lambda_0)}{D_h(\lambda_0)}\right|
\)
over all rational \(\Gamma\)-inner interpolants \(h\) satisfying
\[
h(0)=(0,0),
\quad
h(\lambda_0)=(s_0,p_0),
\]
and to decide whether allowing non-\(\Gamma\)-inner last two
components can yield a larger range of admissible first-component
values.

\begin{prop}[The case \(s_0=0\)]
\label{prop:s0zero}
Let \(\lambda_0\in\mathbb D\setminus\{0\}\) and
\(x_0=(a_0,0,p_0)\in\overline P\). There exists an analytic map
\(x:\mathbb D\to\overline P\) satisfying
\[
x(0)=(0,0,0),
\quad x(\lambda_0)=x_0
\]
if and only if
\[
|p_0|\leq|\lambda_0|,
\quad
|a_0|\leq|\lambda_0|.
\]
In this case, the criterion in Theorem~\ref{thm:AL} gives the correct
answer.
\end{prop}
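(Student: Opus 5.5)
Necessity. Suppose an analytic map \(x=(a,s,p):\mathbb D\to\overline P\) satisfies \(x(0)=(0,0,0)\) and \(x(\lambda_0)=(a_0,0,p_0)\).

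\emph{The bound on \(p_0\).} The pair \(h=(s,p)\) is analytic into \(\Gamma\) with \(h(0)=(0,0)\). The Agler--Young criterion therefore gives \(\kappa(0,p_0)\le|\lambda_0|\), and
\[
\kappa(0,p_0)=\frac{|{-4p_0}|}{4}=|p_0|,
\]
so \(|p_0|\le|\lambda_0|\). Alternatively, \(|p|\le1\) on \(\Gamma\) and \(p(0)=0\), so the classical Schwarz lemma gives the same bound.

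\emph{The bound on \(a_0\).} Apply Proposition~\ref{prop:necessary} at \(\lambda_0\): \(|a_0|\le|\lambda_0|F(0,p_0)\). It remains to show \(F(0,p_0)\le1\), and in fact \(F(0,p_0)=1\). Write \(\lambda_1=w\), \(\lambda_2=-w\) with \(w^2=-p_0\). Then
\[
F(0,p_0)=\tfrac12|1+|w|^2|+\tfrac12(1-|w|^2)=1 .
\]
Hence \(|a_0|\le|\lambda_0|\).

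Sufficiency. Assume \(|p_0|\le|\lambda_0|\) and \(|a_0|\le|\lambda_0|\). The plan is to write down an explicit interpolant
\[
x(\lambda)=(a(\lambda),0,p(\lambda)),
\]
with \(a\) and \(p\) chosen independently. Take \(p=\lambda\,\phi(\lambda)\) and \(a=\lambda\,\psi(\lambda)\), where \(\phi,\psi\) are Schur functions with \(\phi(\lambda_0)=p_0/\lambda_0\) and \(\psi(\lambda_0)=a_0/\lambda_0\). Constant functions work, since these values have modulus at most \(1\); alternatively one can use the Blaschke construction from the proof of Proposition~\ref{prop:corrected}. Then \(a\) and \(p\) map \(\mathbb D\) into \(\overline{\mathbb D}\) and vanish at \(0\).

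To check that \(x\) takes values in \(\overline P\), note first that \((0,p(\lambda))\in\Gamma\) whenever \(|p(\lambda)|\le1\): it equals \((w-w,-w^2)\) with \(|w|\le1\). By the computation above, \(F(0,p(\lambda))=1\). Since \(|a(\lambda)|\le1\), Proposition~\ref{fact:membership} gives \(x(\lambda)\in\overline P\). The interpolation conditions \(x(0)=(0,0,0)\) and \(x(\lambda_0)=x_0\) then hold by construction.

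Consistency with Theorem~\ref{thm:AL}. When \(s_0=0\), condition (iii) of Theorem~\ref{thm:AL} reduces to \(\kappa(0,p_0)=|p_0|\le|\lambda_0|\) together with \(|a_0|\le|\lambda_0|\sqrt{1-0}=|\lambda_0|\). This is exactly the criterion just proved, so the two agree in this case.

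The only real point in the argument is the identity \(F(0,p)\equiv1\) on \(\Gamma\). It is what makes the bound from Proposition~\ref{prop:necessary} coincide with the Alshehri--Lykova bound \(\sqrt{1-\frac14|s_0|^2}=1\).
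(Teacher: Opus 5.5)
Your proof is correct and follows the paper's argument. Necessity uses the Schwarz lemma for $p$, the identity $F(0,p_0)=1$ and Proposition~\ref{prop:necessary}; sufficiency uses an explicit interpolant $(a,0,p)$ checked with Proposition~\ref{fact:membership}. The paper takes $a$ and $p$ to be finite Blaschke products, which is your M\"obius alternative; this makes $x$ rational $\overline P$-inner and so also recovers condition~(i) of Theorem~\ref{thm:AL}, whereas your linear choice with constant $\phi,\psi$ gives only an analytic interpolant.
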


\begin{proof}
{\bf{\emph{Necessity:}}}
Since \(p:\mathbb D\to\overline{\mathbb D}\) is analytic and
\(p(0)=0\), the Schwarz lemma gives
\(
|p_0|\leq|\lambda_0|.
\)
Moreover, \(F(0,p_0)=1\) for every \(p_0\in\mathbb D\). Indeed, choose
\(\mu\in\overline{\mathbb D}\) such that \(\mu^2=-p_0\). Then
\[
(0,p_0)=(\mu+(-\mu),\mu(-\mu)),
\]
and
\[
F(0,p_0)
=\frac12(1+|\mu|^2)
+\frac12(1-|\mu|^2)
=1.
\]
Proposition~\ref{prop:necessary} consequently yields
\[
|a_0|\leq|\lambda_0|.
\]

\smallskip
\noindent
{\bf{\emph{Sufficiency:}}}
Choose finite Blaschke products \(a\) and \(p\) such that
\[
a(0)=p(0)=0,\quad
a(\lambda_0)=a_0,\quad
p(\lambda_0)=p_0.
\]
Such functions exist under the stated inequalities. Since
\(F(0,p(\lambda))=1\) for all \(\lambda\in\mathbb D\) and
\(|a(\lambda)|\leq1\), Proposition~\ref{fact:membership} shows that
\(
x(\lambda)=(a(\lambda),0,p(\lambda))
\)
belongs to \(\overline P\) for every \(\lambda\in\mathbb D\).
This proves sufficiency.
\end{proof}
The Poisson integral formula plays a fundamental role in
relating the boundary behaviour of analytic functions to
their values in the open unit disc. In particular, if
$F$ is a zero-free outer function in $H^\infty(\mathbb D)$,
then
\[
\log |F(z)|
=
\frac{1}{2\pi}
\int_0^{2\pi}
\frac{1-|z|^2}{|e^{it}-z|^2}
\log |F^*(e^{it})|\,dt,
\qquad z\in\mathbb D,
\]
where $F^*$ denotes the radial boundary function of $F$.
Thus, the modulus of an outer function in the interior
of $\mathbb D$ is determined by its boundary modulus.
This observation is particularly useful in the study of
analytic maps into the pentablock. Indeed, for a prescribed
rational $\Gamma$-inner function
\[
h=(s,p)=
\left(\frac{E}{D},\frac{D^{\sim n}}{D}\right),
\]
the Fej\'er--Riesz factorization provides an outer
function $F=A/D$ satisfying
\[
|F(\zeta)|^2
=
1-\frac14|s(\zeta)|^2,
\quad \zeta\in\mathbb T.
\]
Although this identity need not remain valid at points
of the open unit disc, the Poisson representation
determines the interior modulus of $F$ from its
boundary values. More precisely,
\[
\log\left|\frac{A(z)}{D(z)}\right|
=
\frac{1}{4\pi}
\int_0^{2\pi}
\frac{1-|z|^2}{|e^{it}-z|^2}
\log\left(1-\frac14|s(e^{it})|^2\right)\,dt,
\quad z\in\mathbb D.
\]
Consequently, a boundary estimate of the form
\[
|a^*(\zeta)|\leq |F^*(\zeta)|
\quad\text{for almost every }\zeta\in\mathbb T
\]
implies, by the outer-function property and the
Smirnov maximum principle, that
\[
\frac{a}{F}\in H^\infty(\mathbb D),
\quad
\left\|\frac{a}{F}\right\|_\infty\leq1.
\]
Equivalently,
\[
|a(z)|\leq
\left|\frac{A(z)}{D(z)}\right|,
\quad z\in\mathbb D.
\]
This provides the appropriate mechanism for passing
from the boundary geometry of the pentablock to
interior estimates without imposing an unjustified
pointwise identity in $\mathbb D$. Combining this
factorization with the classical Schwarz lemma
yields an exact interpolation criterion when the
last two components are fixed to be a rational
$\Gamma$-inner function. We formulate this
criterion in the following proposition.
\begin{prop}[Poisson formula]
\label{prop:poisson}
Let \(\lambda_0\in\mathbb D\), and let
\(h=(s,p):\mathbb D\to\Gamma\) be a rational
\(\Gamma\)-inner function satisfying
\(
h(0)=(0,0),
h(\lambda_0)=(s_0,p_0).
\)
Suppose that \(h\) admits a representation
\[
s=\frac{E}{D},
\quad
p=\frac{D^{\sim n}}{D},
\]
where \(n\geq1\), \(E,D\) are polynomials,
\(
D^{\sim n}(z)
=z^n\overline{D(1/\overline z)},
\)
and \(D\) has no zeros on \(\overline{\mathbb D}\). Let \(A\) be
a nonzero outer polynomial satisfying the Fej\'er--Riesz identity
\[
|A(\zeta)|^2
=
|D(\zeta)|^2-\frac14|E(\zeta)|^2,
\quad \zeta\in\mathbb T.
\]
Then \(q=A/D\) is an outer function, \(q(\lambda_0)\neq0\), and
\[
\log\left|\frac{A(\lambda_0)}{D(\lambda_0)}\right|
=
\frac12\int_{\mathbb T}
P_{\lambda_0}(\zeta)
\log\left(1-\frac14|s(\zeta)|^2\right)\,dm(\zeta),
\]
where
\(
P_{\lambda_0}(\zeta)
=\frac{1-|\lambda_0|^2}{|\zeta-\lambda_0|^2}
\)
is the Poisson kernel and \(m\) denotes normalized Lebesgue
measure on \(\mathbb T\). The logarithmic integrand is interpreted
almost everywhere on \(\mathbb T\); any singularities at zeros of
\(A\) on \(\mathbb T\) are logarithmically integrable.
\end{prop}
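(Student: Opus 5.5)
The plan has three steps. First, show that $q=A/D$ is outer and zero-free in $\mathbb D$. Second, apply the Poisson--Jensen representation of $\log|q|$ for outer functions. Third, identify the boundary modulus $|q^*|^2$ with $1-\frac14|s|^2$ almost everywhere on $\mathbb T$.

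\emph{Step 1.} Since $A$ is an outer polynomial, all its zeros lie in $\mathbb C\setminus\mathbb D$; some may lie on $\mathbb T$, but none are in $\mathbb D$. I would justify this with the standard fact that a polynomial is outer iff it has no zeros in $\mathbb D$. For instance, factor $A(z)=c\prod(z-\alpha_j)$ and note that each factor $z-\alpha_j$ with $|\alpha_j|\ge1$ is outer, since $\log|z-\alpha_j|$ is harmonic in $\mathbb D$ and is the Poisson integral of its boundary values; when $|\alpha_j|=1$ this uses that $\log|e^{it}-\alpha_j|\in L^1$. Since $D$ has no zeros on $\overline{\mathbb D}$, $1/D$ is analytic and zero-free on a neighbourhood of $\overline{\mathbb D}$, hence outer. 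Products of outer functions are outer, so $q=A/D$ is outer, bounded, and has no zeros in $\mathbb D$. In particular $q(\lambda_0)\ne0$.

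\emph{Step 2.} For a bounded outer function $q$, one has $\log|q(\lambda)|=\int_{\mathbb T}P_\lambda\log|q^*|\,dm$ for all $\lambda\in\mathbb D$. I would quote this as the defining property of outer functions, as recalled just before the proposition. Because $q$ is continuous on $\overline{\mathbb D}$, $q^*=q|_{\mathbb T}$ everywhere. The function $\log|q^*|$ is integrable: it is bounded above, and near each of the finitely many zeros of $A$ on $\mathbb T$ it behaves like $k\log|\zeta-\alpha|$, which is integrable. This settles the remark in the statement about singularities.

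\emph{Step 3.} On $\mathbb T$, divide the Fej\'er--Riesz identity by $|D|^2\neq0$:
\[
|q(\zeta)|^2=1-\tfrac14|E(\zeta)/D(\zeta)|^2=1-\tfrac14|s(\zeta)|^2 .
\]
Taking logarithms gives $\log|q(\zeta)|=\tfrac12\log\bigl(1-\tfrac14|s(\zeta)|^2\bigr)$ for every $\zeta\in\mathbb T$ except the finitely many zeros of $A$, where both sides are $-\infty$. This is a null set for $m$. Substituting into Step 2 at $\lambda=\lambda_0$ gives the formula.

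The main obstacle is rigour in Step 1 when $A$ has zeros on $\mathbb T$. There one must check that the boundary zeros do not spoil outerness, i.e.\ that no singular inner factor is hidden. The explicit computation with $\log|z-\alpha|$ for $|\alpha|=1$ handles this: it is harmonic in $\mathbb D$, equals the Poisson integral of its $L^1$ boundary values (for instance by a limiting argument from $|\alpha|>1$ using dominated convergence), and so the factor is outer. The remaining hypotheses ($h(0)=(0,0)$, $\Gamma$-innerness) are not needed beyond guaranteeing the existence of $A$ via $|E|\le2|D|$ on $\mathbb T$.
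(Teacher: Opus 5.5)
Your proposal is correct and follows essentially the same route as the paper. Both arguments get outerness and zero-freeness of $q=A/D$ from the outerness of $A$ and the zero-freeness of $D$ near $\overline{\mathbb D}$, divide the Fej\'er--Riesz identity by $|D|^2$ on $\mathbb T$, and substitute into the Poisson representation of $\log|q|$; your extra checks (no singular inner factor from boundary zeros of $A$, integrability of $\log|q|$ on $\mathbb T$) just make explicit what the paper's proof takes for granted.
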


\begin{proof}
Since \(D\) has no zeros on \(\overline{\mathbb D}\), the function
\(1/D\) is analytic and zero-free on a neighbourhood of
\(\overline{\mathbb D}\). As \(A\) is outer, the quotient
\(q=A/D\) is also outer and has no zeros in \(\mathbb D\).
In particular, \(q(\lambda_0)\neq0\).
Dividing the Fej\'er--Riesz identity by \(|D(\zeta)|^2\), we obtain
\[
|q(\zeta)|^2
=1-\frac14|s(\zeta)|^2,
\quad \zeta\in\mathbb T.
\]
Consequently, we have
\[
\log|q(\zeta)|
=\frac12\log\left(1-\frac14|s(\zeta)|^2\right)
\]
almost everywhere on \(\mathbb T\). Since \(q\) is outer, the
logarithm of its modulus is the Poisson integral of its boundary
logarithmic modulus. Hence
\[
\log|q(\lambda_0)|
=
\int_{\mathbb T}
P_{\lambda_0}(\zeta)\log|q(\zeta)|\,dm(\zeta).
\]
Substituting the boundary identity yields
\[
\log|q(\lambda_0)|
=
\frac12\int_{\mathbb T}
P_{\lambda_0}(\zeta)
\log\left(1-\frac14|s(\zeta)|^2\right)\,dm(\zeta),
\]
which proves the assertion.
\end{proof}
\begin{rem}
Proposition~\ref{prop:poisson} expresses the quantity appearing in
Proposition~\ref{prop:corrected} in terms of the boundary values of
the second component \(s\). In particular,           
\[
\left|\frac{A(\lambda_0)}{D(\lambda_0)}\right|
=
\exp\left\{
\frac12\int_{\mathbb T}
P_{\lambda_0}(\zeta)
\log\left(1-\frac14|s(\zeta)|^2\right)\,dm(\zeta)
\right\}.
\]
Thus, for a fixed rational \(\Gamma\)-inner interpolant
\(h=(s,p)\), the interpolation condition in
Proposition~\ref{prop:corrected} can be written as
\[
|a_0|
\leq
|\lambda_0|
\exp\left\{
\frac12\int_{\mathbb T}
P_{\lambda_0}(\zeta)
\log\left(1-\frac14|s(\zeta)|^2\right)\,dm(\zeta)
\right\}.
\]
It follows that determining the largest admissible value of
\(|a_0|\), among rational \(\Gamma\)-inner interpolants satisfying
\(
h(0)=(0,0),
h(\lambda_0)=(s_0,p_0),
\)
leads to the variational problem of maximizing
\[
\int_{\mathbb T}
P_{\lambda_0}(\zeta)
\log\left(1-\frac14|s(\zeta)|^2\right)\,dm(\zeta)
\]
over the corresponding class of interpolants. The boundary values
of \(s\) cannot be varied independently of those of \(p\), since
both components must arise from an admissible \(\Gamma\)-inner
function satisfying the prescribed interpolation conditions.
\end{rem}

\subsection{A sharp comparison with the proposed bound}
\label{subsec:comparison}

We now compare the interpolation criterion obtained in
Proposition~\ref{prop:corrected} with the condition appearing
in Theorem~\ref{thm:AL}. The essential distinction between
these two conditions arises from the difference between
boundary and interior estimates for the first coordinate
of a rational $\Gamma$-inner function.
Let
\[
h=(s,p)=
\left(\frac{E}{D},\frac{D^{\sim n}}{D}\right)
\]
be a rational $\Gamma$-inner function, and let $A$ be
the associated outer Fej\'er--Riesz factor. Put
\(
q=\frac{A}{D}.
\)
The defining boundary factorization gives
\[
|q(\zeta)|^2
=
1-\frac14|s(\zeta)|^2,
\quad \zeta\in\mathbb T.
\]
Since $q$ is outer, the Poisson representation
determines its modulus throughout $\mathbb D$:
\[
2\log|q(\lambda)|
=
\int_{\mathbb T}
P_\lambda(\zeta)
\log\left(1-\frac14|s(\zeta)|^2\right)
\,dm(\zeta),
\quad \lambda\in\mathbb D.
\]
On the other hand, the expression
\(
\log\left(1-\frac14|s(\lambda)|^2\right)
\)
is superharmonic in $\mathbb D$. The comparison
between this superharmonic function and its
harmonic Poisson extension yields a precise
relationship between the two interpolation bounds.

\begin{prop}
\label{prop:sharp-bound}
Let $h=(s,p)$ and $q=A/D$ satisfy the hypotheses
of Proposition~\ref{prop:poisson}. Then, for every
$\lambda\in\mathbb D$,
\[
\left|\frac{A(\lambda)}{D(\lambda)}\right|
\leq
\sqrt{1-\frac14|s(\lambda)|^2}.
\]
Moreover, equality holds at some point of
$\mathbb D$ if and only if $s$ is constant.
Consequently, if $s$ is nonconstant, then
\[
\left|\frac{A(\lambda)}{D(\lambda)}\right|
<
\sqrt{1-\frac14|s(\lambda)|^2}
\]
for every $\lambda\in\mathbb D$.
In particular, if $s(\lambda_0)=s_0$, then
\[
\left|\frac{A(\lambda_0)}{D(\lambda_0)}\right|
\leq
\sqrt{1-\frac14|s_0|^2},
\]
with strict inequality whenever $s$ is
nonconstant.
\end{prop}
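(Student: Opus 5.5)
Define $u(\lambda)=\log\bigl(1-\tfrac14|s(\lambda)|^2\bigr)$ on $\overline{\mathbb D}$ and let $U(\lambda)=\int_{\mathbb T}P_\lambda(\zeta)u(\zeta)\,dm(\zeta)$ be the Poisson integral of its boundary values. By Proposition~\ref{prop:poisson}, $2\log|q(\lambda)|=U(\lambda)$. The claim is therefore $U\le u$ on $\mathbb D$, with equality somewhere only when $s$ is constant. The plan is to show that $u$ is superharmonic and continuous up to the boundary, and then apply the minimum principle to $u-U$.

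\emph{Regularity.} Since $s=E/D$ with $D$ zero-free on $\overline{\mathbb D}$, $s$ is analytic on a neighbourhood of $\overline{\mathbb D}$. Because $h$ is $\Gamma$-valued, $|s|\le2$ on $\overline{\mathbb D}$. If $s$ is nonconstant, the maximum principle gives $|s|<2$ in $\mathbb D$, so $u$ is real-analytic in $\mathbb D$. On $\mathbb T$, $u$ may take the value $-\infty$ at the finitely many points where $|s|=2$ (the zeros of $A$). To avoid difficulties there, I would work with $u_\varepsilon=\log\bigl(1+\varepsilon-\tfrac14|s|^2\bigr)$, which is continuous on $\overline{\mathbb D}$, and let $\varepsilon\downarrow0$ at the end by monotone convergence in the Poisson integral.

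\emph{Superharmonicity.} Write $w=s/2$, which is analytic with $|w|<1$. A direct computation gives
\[
\Delta\log(1-|w|^2)=-\frac{4|w'|^2}{(1-|w|^2)^2}\le0 ,
\]
and the same holds with $1+\varepsilon$ in place of $1$. In terms of $s$, this reads $\Delta u=-|s'|^2/(1-\tfrac14|s|^2)^2$, which matches the density appearing in the Green potential. Hence $u_\varepsilon-P[u_\varepsilon]$ is superharmonic, continuous on $\overline{\mathbb D}$, and zero on $\mathbb T$. The minimum principle then gives $u_\varepsilon\ge P[u_\varepsilon]$. Letting $\varepsilon\to0$ yields $u\ge U$, i.e.\ $|q|\le\sqrt{1-\tfrac14|s|^2}$.

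\emph{Equality case.} If $s$ is constant, then $|s|<2$ is forced (for instance by $s(0)=0$, or simply because $A\ne0$). Then $u$ is constant, equal to its own Poisson integral, and equality holds everywhere. Conversely, suppose equality holds at some $\lambda_1\in\mathbb D$. Then $u-U\ge0$ is superharmonic on $\mathbb D$ and attains its minimum $0$ at an interior point, so by the strong minimum principle $u\equiv U$. Thus $u$ is harmonic, so $\Delta u=0$, which forces $s'\equiv0$ and $s$ is constant. The final assertions about strict inequality and the point $\lambda_0$ then follow immediately.

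The main obstacle I expect is the boundary behaviour at the points of $\mathbb T$ where $|s|=2$. There $u=-\infty$, so the minimum principle cannot be applied to $u$ directly. The $\varepsilon$-regularization handles this, since $u_\varepsilon\downarrow u$ and the Poisson integrals converge by monotone convergence; the logarithmic integrability is guaranteed by Proposition~\ref{prop:poisson}.
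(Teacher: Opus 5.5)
Your proposal is correct and follows the paper's proof: superharmonicity of $u=\log(1-\frac14|s|^2)$ from the Laplacian formula, comparison of $u$ with its Poisson integral (which equals $2\log|q|$ by Proposition~\ref{prop:poisson}), and the strong minimum principle applied to $u-U$ for the equality case. Your $\varepsilon$-regularization $u_\varepsilon=\log(1+\varepsilon-\frac14|s|^2)$, passed to the limit by monotone convergence, adds something the paper lacks: it rigorously justifies the comparison step at the boundary points where $|s|=2$, which the paper handles only by citing a ``superharmonic comparison principle''.
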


\begin{proof}
Since $h$ maps $\mathbb D$ into $\Gamma$,
we have
\(
|s(\lambda)|<2, \lambda\in\mathbb D.
\)
Define
\[
u(\lambda)
=
\log\left(1-\frac14|s(\lambda)|^2\right),
\quad \lambda\in\mathbb D.
\]
The function $u$ is well defined and smooth
in $\mathbb D$. A direct calculation gives
\[
\Delta u(\lambda)
=
-\frac{|s'(\lambda)|^2}
{\left(1-\frac14|s(\lambda)|^2\right)^2}
\leq0,
\]
where $\Delta$ denotes the usual Laplacian.
Thus $u$ is superharmonic on $\mathbb D$.
Since $s$ is rational and analytic on a
neighbourhood of $\overline{\mathbb D}$,
the boundary values of $u$ are integrable.
By the superharmonic comparison principle,
\[
u(\lambda)
\geq
\int_{\mathbb T}
P_\lambda(\zeta)u(\zeta)\,dm(\zeta),
\quad \lambda\in\mathbb D.
\]
Using the Poisson representation for the
outer function $q$, we obtain
\[
\begin{aligned}
\log\left(1-\frac14|s(\lambda)|^2\right)
&\geq
\int_{\mathbb T}P_\lambda(\zeta)
\log\left(1-\frac14|s(\zeta)|^2\right)
\,dm(\zeta)\\
&=2\log|q(\lambda)|.
\end{aligned}
\]
Exponentiating yields
\(
|q(\lambda)|^2
\leq
1-\frac14|s(\lambda)|^2,
\)
and hence
\(
|q(\lambda)|
\leq
\sqrt{1-\frac14|s(\lambda)|^2}.
\)
We next characterize the equality case.
Let
\[
v(\lambda)
=
\int_{\mathbb T}
P_\lambda(\zeta)u(\zeta)\,dm(\zeta).
\]
Then $v$ is harmonic in $\mathbb D$ and
$u-v\geq0$.
Suppose that equality holds at some
$\lambda_0\in\mathbb D$. Then
\[
(u-v)(\lambda_0)=0.
\]
Since $u-v$ is superharmonic and nonnegative,
the strong minimum principle implies that
\[
u-v\equiv0
\]
throughout $\mathbb D$. Therefore $u$ is
harmonic, so
\(
\Delta u\equiv0.
\)
It follows that
\(
s'\equiv0,
\)
and hence $s$ is constant.

Conversely, if $s$ is constant, then $u$
is constant and therefore coincides with
its Poisson integral. Consequently,
\[
|q(\lambda)|^2
=
1-\frac14|s(\lambda)|^2
\]
for every $\lambda\in\mathbb D$.
Thus equality holds at an interior point
if and only if $s$ is constant. In particular,
if $s$ is nonconstant, the inequality is
strict at every point of $\mathbb D$.
\end{proof}

\begin{rem}
\label{rem:comparison-bounds}
In the interpolation setting of
Proposition~\ref{prop:corrected}, we have
\(
h(0)=(0,0).
\)
In particular, $s(0)=0$, so $s$ is constant
if and only if $s\equiv0$.
Consequently, if $s_0\neq0$, then every
rational $\Gamma$-inner function satisfying
\[
h(0)=(0,0),
\quad
h(\lambda_0)=(s_0,p_0)
\]
has a nonconstant first coordinate. By
Proposition~\ref{prop:sharp-bound},
\[
\left|\frac{A(\lambda_0)}{D(\lambda_0)}\right|
<
\sqrt{1-\frac14|s_0|^2}.
\]
Combining this inequality with
Proposition~\ref{prop:corrected}, we see that
the exact interpolation condition for a
prescribed rational $\Gamma$-inner function
is
\[
|a_0|
\leq
|\lambda_0|
\left|\frac{A(\lambda_0)}{D(\lambda_0)}\right|,
\]
whereas the corresponding condition proposed
in Theorem~\ref{thm:AL} involves the larger
quantity
\[
|\lambda_0|
\sqrt{1-\frac14|s_0|^2}.
\]
Thus, whenever $s_0\neq0$,
\[
|\lambda_0|
\left|\frac{A(\lambda_0)}{D(\lambda_0)}\right|
<
|\lambda_0|
\sqrt{1-\frac14|s_0|^2}.
\]
This strict inequality identifies the
essential difference between the two bounds.
The quantity $|\frac{A(\lambda_0)}{D(\lambda_0)}|$
is determined by the boundary modulus of
the outer spectral factor through the
Poisson formula, whereas
$\sqrt{1-\frac14|s_0|^2}$ is obtained by
evaluating the corresponding boundary
expression directly at the interior point
$\lambda_0$.
The latter quantity is therefore generally
larger and does not give the exact
interpolation bound when the rational
$\Gamma$-inner component is fixed.
In particular, the boundary Fej\'er--Riesz
identity cannot be extended to the open
unit disc as a pointwise identity.
\end{rem}

\subsection{A Green-potential representation}
\label{subsec:green}

In the preceding subsection, we established that the
outer-factor quantity associated with a rational
$\Gamma$-inner function satisfies
\[
\left|\frac{A(\lambda_0)}{D(\lambda_0)}\right|
\leq
\sqrt{1-\frac14|s_0|^2},
\]
with equality if and only if the first coordinate
$s$ is constant. This comparison follows from the
superharmonicity of
\(
u(\lambda)
=
\log\left(1-\frac14|s(\lambda)|^2\right)
\)
and the Poisson representation of the outer
Fej\'er--Riesz factor.
We now refine this result by obtaining an exact
representation of the difference between the
logarithms of these two quantities. The difference
is expressed as the Green potential of a
nonnegative density determined by the derivative
of $s$. This representation not only recovers
the strict inequality established in
Proposition~\ref{prop:sharp-bound}, but also
quantifies the extent to which the outer-factor
bound differs from the pointwise bound appearing
in Theorem~\ref{thm:AL}.
Recall that the Green function of the unit disc
with pole at $\lambda_0\in\mathbb D$ is given by
\[
G(\lambda_0,\lambda)
=
\log\left|
\frac{1-\overline{\lambda_0}\lambda}
{\lambda-\lambda_0}
\right|,
\quad
\lambda\in\mathbb D\setminus\{\lambda_0\}.
\]

\begin{prop}
\label{prop:green}
Let $h=(s,p)$ and $q=A/D$ satisfy the hypotheses
of Proposition~\ref{prop:poisson}, and suppose that
$s(\lambda_0)=s_0$ for some $\lambda_0\in\mathbb D$.
Then
\[
\begin{aligned}
\frac12
\log\left(1-\frac14|s_0|^2\right)
-
\log\left|
\frac{A(\lambda_0)}{D(\lambda_0)}
\right|=
\frac{1}{4\pi}
\int_{\mathbb D}
G(\lambda_0,\lambda)
\frac{|s'(\lambda)|^2}
{\left(1-\frac14|s(\lambda)|^2\right)^2}
\,dA(\lambda),
\end{aligned}
\]
where $dA$ denotes planar Lebesgue area measure.
In particular, the integral on the right-hand
side is nonnegative and vanishes if and only
if $s$ is constant. Consequently,
\[
\left|
\frac{A(\lambda_0)}{D(\lambda_0)}
\right|
<
\sqrt{1-\frac14|s_0|^2}
\]
whenever $s$ is nonconstant.
\end{prop}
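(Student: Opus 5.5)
The identity is the Riesz decomposition (Green's representation formula) applied to the superharmonic function
\[
u(\lambda)=\log\Bigl(1-\tfrac14|s(\lambda)|^2\Bigr)
\]
from the proof of Proposition~\ref{prop:sharp-bound}. That proof computed
\[
\Delta u=-\frac{|s'|^2}{\bigl(1-\frac14|s|^2\bigr)^2}.
\]
Proposition~\ref{prop:poisson} identifies $2\log|A(\lambda_0)/D(\lambda_0)|$ with the Poisson integral $\int_{\mathbb T}P_{\lambda_0}u\,dm$. So it suffices to prove
\[
u(\lambda_0)-\int_{\mathbb T}P_{\lambda_0}(\zeta)u(\zeta)\,dm(\zeta)
=-\frac{1}{2\pi}\int_{\mathbb D}G(\lambda_0,\lambda)\,\Delta u(\lambda)\,dA(\lambda)
\]
and then divide by $2$. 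The sign and normalization of this formula can be checked on $u(z)=|z|^2-1$ at $\lambda_0=0$: there $\Delta u=4$ and $\int_{\mathbb D}\log(1/|z|)\,dA=\pi/2$, so the right-hand side is $-1=u(0)$ minus a vanishing Poisson integral.

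\textbf{Step 1 (interior discs).} Fix $r$ with $|\lambda_0|<r<1$. Since $|s|<2$ on $\mathbb D$, $u$ is $C^\infty$ on a neighbourhood of $\overline{D(0,r)}$. Green's second identity on $D(0,r)$ with the Green function $G_r(\lambda_0,\cdot)$ of that disc gives the formula above with $\mathbb D,\mathbb T,P_{\lambda_0},G$ replaced by $D(0,r)$, the circle $|\zeta|=r$, the Poisson kernel $P^{(r)}_{\lambda_0}$ and $G_r$. Equivalently, rescale $\lambda\mapsto r\lambda$ and apply the classical formula to $u(r\,\cdot)$, which is smooth on $\overline{\mathbb D}$.

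\textbf{Step 2 (limit $r\to1$).} The area term is straightforward. The integrand $G_r(\lambda_0,\lambda)\,|s'|^2/(1-\frac14|s|^2)^2\,\chi_{D(0,r)}$ is nonnegative and increases to the corresponding integrand on $\mathbb D$, so monotone convergence applies, with the limit possibly $+\infty$ a priori.

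The main obstacle is the boundary term. The function $u$ may equal $-\infty$ at the finitely many points of $\mathbb T$ where $|s|=2$, equivalently where $A$ vanishes, so uniform convergence is not available. The plan is a generalized dominated convergence argument:
\begin{itemize}
\item $u(r\zeta)\le 0$, which gives an upper bound.
\item Proposition~\ref{prop:sharp-bound} gives the lower bound $u(r\zeta)\ge 2\log|A(r\zeta)|-2\log|D(r\zeta)|$.
\item $\log|D(r\zeta)|$ converges uniformly, because $D$ has no zeros near $\overline{\mathbb D}$.
\item $\log|A(r\zeta)|\to\log|A(\zeta)|$ in $L^1(\mathbb T)$. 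Factor $A$ into linear factors; for each factor this is the standard fact $\log|r\zeta-a|\to\log|\zeta-a|$ in $L^1$, including the case $|a|=1$.
\item $u(r\zeta)\to u(\zeta)$ almost everywhere, and $P^{(r)}_{\lambda_0}(r\zeta)\to P_{\lambda_0}(\zeta)$ uniformly.
\end{itemize}
Together these imply that the boundary Poisson integrals converge to $\int_{\mathbb T}P_{\lambda_0}u\,dm$, which is finite by Proposition~\ref{prop:poisson}. Consequently the area integral is finite and the identity holds.

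\textbf{Step 3 (consequences).} The integrand $G\,|s'|^2/(1-\frac14|s|^2)^2$ is nonnegative because $G>0$ on $\mathbb D\setminus\{\lambda_0\}$. Since $G>0$ off $\lambda_0$ and $s'$ is continuous, the integral vanishes iff $s'\equiv0$ on $\mathbb D$, that is, iff $s$ is constant. Exponentiating the identity then gives the strict inequality whenever $s$ is nonconstant, recovering Proposition~\ref{prop:sharp-bound}.
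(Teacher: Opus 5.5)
Your proposal is correct and follows the paper's route: apply Green's representation formula to $u=\log\bigl(1-\frac14|s|^2\bigr)$ with $\Delta u=-|s'|^2/\bigl(1-\frac14|s|^2\bigr)^2$, identify the Poisson term with $2\log|A(\lambda_0)/D(\lambda_0)|$ via Proposition~\ref{prop:poisson}, and read off nonnegativity and the equality case from $G>0$. The only difference is one of rigor. The paper applies Green's formula on all of $\mathbb D$ directly, citing only integrability of the boundary values, whereas you justify this step despite the possible logarithmic singularities of $u$ at zeros of $A$ on $\mathbb T$: you exhaust by discs $D(0,r)$, use monotone convergence for the area term, and use generalized dominated convergence for the boundary term, with the lower bound taken from Proposition~\ref{prop:sharp-bound}.
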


\begin{proof}
Define
\[
u(\lambda)
=
\log\left(1-\frac14|s(\lambda)|^2\right),
\quad \lambda\in\mathbb D.
\]
Since $h$ is a rational $\Gamma$-inner function,
we have $|s(\lambda)|<2$ for every
$\lambda\in\mathbb D$. Hence $u$ is well defined
and smooth in $\mathbb D$.
A direct calculation gives
\[
\Delta u(\lambda)
=
-\frac{|s'(\lambda)|^2}
{\left(1-\frac14|s(\lambda)|^2\right)^2}.
\]
Thus $u$ is superharmonic in $\mathbb D$.
Since $s$ is rational and analytic on a
neighbourhood of $\overline{\mathbb D}$,
the boundary function
\[
u(\zeta)
=
\log\left(1-\frac14|s(\zeta)|^2\right)
\]
is integrable on $\mathbb T$. Applying
Green's representation formula to $u$
gives
\[
\begin{aligned}
u(\lambda_0)
={}
\int_{\mathbb T}
P_{\lambda_0}(\zeta)
u(\zeta)\,dm(\zeta)-
\frac{1}{2\pi}
\int_{\mathbb D}
G(\lambda_0,\lambda)
\Delta u(\lambda)\,dA(\lambda),
\end{aligned}
\]
where $m$ is normalized Lebesgue measure
on $\mathbb T$.
Substituting the expression for $\Delta u$,
we obtain
\[
\begin{aligned}
u(\lambda_0)
=
\int_{\mathbb T}
P_{\lambda_0}(\zeta)
u(\zeta)\,dm(\zeta)
+
\frac{1}{2\pi}
\int_{\mathbb D}
G(\lambda_0,\lambda)
\frac{|s'(\lambda)|^2}
{\left(1-\frac14|s(\lambda)|^2\right)^2}
\,dA(\lambda).
\end{aligned}
\]
On the other hand, the Poisson representation
for the outer function $q=A/D$, established
in Proposition~\ref{prop:poisson}, yields
\[
\begin{aligned}
\int_{\mathbb T}
P_{\lambda_0}(\zeta)
u(\zeta)\,dm(\zeta)
=
\int_{\mathbb T}
P_{\lambda_0}(\zeta)
\log|q(\zeta)|^2\,dm(\zeta)=
2\log|q(\lambda_0)|.
\end{aligned}
\]
Since $s(\lambda_0)=s_0$, it follows that
\[
\begin{aligned}
\log\left(1-\frac14|s_0|^2\right)
={}
2\log\left|
\frac{A(\lambda_0)}{D(\lambda_0)}
\right|+
\frac{1}{2\pi}
\int_{\mathbb D}
G(\lambda_0,\lambda)
\frac{|s'(\lambda)|^2}
{\left(1-\frac14|s(\lambda)|^2\right)^2}
\,dA(\lambda).
\end{aligned}
\]
Rearranging and dividing by $2$ gives
\[
\begin{aligned}
\frac12
\log\left(1-\frac14|s_0|^2\right)
-
\log\left|
\frac{A(\lambda_0)}{D(\lambda_0)}
\right|=
\frac{1}{4\pi}
\int_{\mathbb D}
G(\lambda_0,\lambda)
\frac{|s'(\lambda)|^2}
{\left(1-\frac14|s(\lambda)|^2\right)^2}
\,dA(\lambda).
\end{aligned}
\]
Finally, the Green function satisfies
\(
G(\lambda_0,\lambda)>0
\)
for every
$\lambda\in\mathbb D\setminus\{\lambda_0\}$.
Since the remaining factor in the integrand
is nonnegative, the integral is nonnegative.
Moreover, the integral vanishes if and only
if $s'\equiv0$, which is equivalent to
$s$ being constant. Therefore, whenever
$s$ is nonconstant, the integral is strictly
positive, and hence
\[
\left|
\frac{A(\lambda_0)}{D(\lambda_0)}
\right|
<
\sqrt{1-\frac14|s_0|^2}.
\]
This completes the proof.
\end{proof}

\begin{rem}
\label{rem:green-potential}
Proposition~\ref{prop:green} gives a quantitative
refinement of Proposition~\ref{prop:sharp-bound}.
Indeed, the difference between the logarithms
of the pointwise bound and the outer-factor
bound is exactly the Green potential of the
nonnegative density
\[
\frac{|s'(\lambda)|^2}
{\left(1-\frac14|s(\lambda)|^2\right)^2}.
\]
In particular, if we define
\[
\mathcal G_s(\lambda_0)
=
\frac{1}{4\pi}
\int_{\mathbb D}
G(\lambda_0,\lambda)
\frac{|s'(\lambda)|^2}
{\left(1-\frac14|s(\lambda)|^2\right)^2}
\,dA(\lambda),
\]
then Proposition~\ref{prop:green} can be
written equivalently as
\[
\left|
\frac{A(\lambda_0)}{D(\lambda_0)}
\right|
=
\sqrt{1-\frac14|s_0|^2}\,
\exp\bigl(-\mathcal G_s(\lambda_0)\bigr).
\]
Thus the outer-factor bound appearing in
Proposition~\ref{prop:corrected} is obtained
from the pointwise bound in
Theorem~\ref{thm:AL} by multiplying it
by the correction factor
\(
\exp\bigl(-\mathcal G_s(\lambda_0)\bigr).
\)
This factor belongs to $(0,1]$ and is
strictly smaller than $1$ whenever $s$
is nonconstant.
Consequently, for a fixed interpolation
target $(s_0,p_0)$, maximizing the
outer-factor quantity over an admissible
class of rational $\Gamma$-inner
interpolants is equivalent to minimizing
the corresponding Green potential over
that class.
This representation identifies the precise
analytic contribution responsible for the
difference between the two bounds and
provides a potential-theoretic interpretation
of the corrected interpolation criterion.
\end{rem}

\subsection{The variational formulation}
\label{subsec:variational}

The preceding results lead naturally to a variational
formulation of the interpolation problem \cite{Ransford1995}. In
Proposition~\ref{prop:corrected}, we obtained an exact
criterion for the existence of a pentablock-valued
interpolant when its last two components are fixed
to be a prescribed rational $\Gamma$-inner function.
Proposition~\ref{prop:sharp-bound} subsequently
established a sharp comparison between the
corresponding outer-factor bound and the pointwise
bound appearing in Theorem~\ref{thm:AL}.
Finally, Proposition~\ref{prop:green} expressed
the difference between these bounds as a
nonnegative Green potential.
These observations suggest optimizing the
outer-factor quantity over all rational
$\Gamma$-inner functions satisfying the
prescribed interpolation conditions. The
resulting extremal quantity provides an exact
description of the interpolation problem
within this restricted class and admits
equivalent formulations in terms of both
Poisson integrals and Green potentials.
Fix $\lambda_0\in\mathbb D\setminus\{0\}$ and
$(s_0,p_0)\in\Gamma$. Define
\[
\begin{aligned}
\mathcal H(\lambda_0;s_0,p_0)
=\bigl\{h=(s,p):
h\text{ is a rational }\Gamma\text{-inner function},~h(0)=(0,0),\
h(\lambda_0)=(s_0,p_0)\bigr\}.
\end{aligned}
\]
Suppose that this class is nonempty. For each
$h\in\mathcal H(\lambda_0;s_0,p_0)$, let
$A_h$ and $D_h$ denote the corresponding
Fej\'er--Riesz and denominator polynomials,
respectively, and put
\(
q_h=\frac{A_h}{D_h}.
\)
We define
\[
M(\lambda_0;s_0,p_0)
=
\sup_{h\in\mathcal H(\lambda_0;s_0,p_0)}
|q_h(\lambda_0)|.
\]
By Proposition~\ref{prop:poisson}, this
quantity admits the representation
\[
\begin{aligned}
M(\lambda_0;s_0,p_0)
=
\exp\bigg\{
\frac12
\sup_{h=(s,p)\in\mathcal H(\lambda_0;s_0,p_0)}
\int_{\mathbb T}
P_{\lambda_0}(\zeta)
\log\left(1-\frac14|s(\zeta)|^2\right)
\,dm(\zeta)
\bigg\}.
\end{aligned}
\]
Thus the maximization of the outer-factor
quantity is equivalent to maximizing the
Poisson integral of the logarithmic
boundary weight.
On the other hand, the Green-potential
representation obtained in
Proposition~\ref{prop:green} gives
\[
|q_h(\lambda_0)|
=
\sqrt{1-\frac14|s_0|^2}\,
\exp\bigl(-\mathcal G_s(\lambda_0)\bigr),
\]
where
\(
\mathcal G_s(\lambda_0)
=
\frac{1}{4\pi}
\int_{\mathbb D}
G(\lambda_0,\lambda)
\frac{|s'(\lambda)|^2}
{\left(1-\frac14|s(\lambda)|^2\right)^2}
\,dA(\lambda).
\)
Since $s(\lambda_0)=s_0$ is fixed throughout
the admissible class, it follows that
\[
M(\lambda_0;s_0,p_0)
=
\sqrt{1-\frac14|s_0|^2}\,
\exp\left(
-\inf_{h=(s,p)\in\mathcal H(\lambda_0;s_0,p_0)}
\mathcal G_s(\lambda_0)
\right).
\]
Consequently, the extremal interpolation
problem is equivalent to minimizing the
Green potential over the admissible
rational $\Gamma$-inner class.
The following proposition identifies the
precise interpolation significance of
this extremal quantity.

\begin{prop}
\label{prop:fixed-class}
Fix $\lambda_0\in\mathbb D\setminus\{0\}$
and $(s_0,p_0)\in\Gamma$, and suppose that
$\mathcal H(\lambda_0;s_0,p_0)$ is nonempty.
For each $h\in\mathcal H(\lambda_0;s_0,p_0)$,
let $A_h$ and $D_h$ denote the associated
Fej\'er--Riesz factors.
Then, for a fixed
$h\in\mathcal H(\lambda_0;s_0,p_0)$ and
$a_0\in\mathbb C$, there exists an analytic
map
\[
x=(a,s,p):\mathbb D\longrightarrow\overline P
\]
satisfying
\[
x(0)=(0,0,0),
\quad
x(\lambda_0)=(a_0,s_0,p_0),
\]
whose last two components coincide with $h$,
if and only if
\[
|a_0|
\leq
|\lambda_0|
\left|\frac{A_h(\lambda_0)}
{D_h(\lambda_0)}\right|.
\]
Moreover, there exists an analytic map 
$x=(a,s,p):\mathbb D\to\overline P$ such that 
\[\sup\biggl\{
|a_0|: x(0)=(0,0,0),\
x(\lambda_0)=(a_0,s_0,p_0),
(s,p)\in\mathcal H(\lambda_0;s_0,p_0)
\biggr\}=
|\lambda_0|M(\lambda_0;s_0,p_0).
\]
Equivalently, the extremal value is
\[
\begin{aligned}
|\lambda_0|
\sqrt{1-\frac14|s_0|^2}
\exp\left(
-\inf_{h=(s,p)\in\mathcal H(\lambda_0;s_0,p_0)}
\mathcal G_s(\lambda_0)
\right).
\end{aligned}
\]
The supremum is attained if and only if
the supremum defining
$M(\lambda_0;s_0,p_0)$ is attained by some
member of $\mathcal H(\lambda_0;s_0,p_0)$.
\end{prop}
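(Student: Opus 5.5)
The plan is to reduce everything to Proposition~\ref{prop:corrected} and then take a supremum over the admissible class. The statement is somewhat awkwardly phrased: the existential quantifier ``there exists an analytic map $x$ such that $\sup\{\dots\}=\ldots$'' should be read as a statement about the set
\[
\mathcal S=\bigl\{|a_0|:\ \exists\, x=(a,s,p):\mathbb D\to\overline P \text{ analytic},\ x(0)=(0,0,0),\ x(\lambda_0)=(a_0,s_0,p_0),\ (s,p)\in\mathcal H(\lambda_0;s_0,p_0)\bigr\},
\]
and I will prove $\sup\mathcal S=|\lambda_0|M(\lambda_0;s_0,p_0)$.

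\textbf{Step 1: the fixed-$h$ equivalence.} This is exactly the equivalence (ii)$\Leftrightarrow$(iii) of Proposition~\ref{prop:corrected}, applied to $h\in\mathcal H(\lambda_0;s_0,p_0)$. Each such $h$ satisfies $h(0)=(0,0)$ and $h(\lambda_0)=(s_0,p_0)$, together with the Fej\'er--Riesz data $A_h,D_h$. So this part needs nothing new beyond citing that proposition.

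\textbf{Step 2: the upper bound.} Take any $|a_0|\in\mathcal S$, realised by some $x$ whose last two components form $h\in\mathcal H$. Step 1 gives
\[
|a_0|\le|\lambda_0|\,|q_h(\lambda_0)|\le|\lambda_0|M,
\]
so $\sup\mathcal S\le|\lambda_0|M$.

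\textbf{Step 3: the lower bound.} For each $h\in\mathcal H$, take $a_0=\lambda_0 q_h(\lambda_0)$. Then $|a_0|=|\lambda_0||q_h(\lambda_0)|$, and the implication (iii)$\Rightarrow$(i) of Proposition~\ref{prop:corrected} yields an interpolant. Concretely, it is the rational $\overline P$-inner map $(zA_h/D_h,\,s,\,p)$, taking $B(z)=z$. Hence $|\lambda_0||q_h(\lambda_0)|\in\mathcal S$, and taking the supremum over $h$ gives $\sup\mathcal S\ge|\lambda_0|M$.

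\textbf{Step 4: the Green-potential form.} The equivalent extremal expression comes from Proposition~\ref{prop:green}, in the form of Remark~\ref{rem:green-potential}:
\[
|q_h(\lambda_0)|=\sqrt{1-\tfrac14|s_0|^2}\,e^{-\mathcal G_s(\lambda_0)}.
\]
The prefactor is independent of $h$, and $t\mapsto e^{-t}$ is continuous and decreasing. Therefore $\sup_h e^{-\mathcal G_s}=e^{-\inf_h\mathcal G_s}$, with the value $e^{-\infty}=0$ handled trivially.

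\textbf{Step 5: attainment.} This is the step needing care, because $\mathcal S$ is defined through all analytic $x$ with $(s,p)\in\mathcal H$, not just the extremal ones. Suppose first that $M=|q_{h^*}(\lambda_0)|$ for some $h^*\in\mathcal H$. Then Step 3 applied to $h^*$ exhibits an element of $\mathcal S$ equal to $|\lambda_0|M$, so the supremum is attained. Conversely, suppose $|a_0|=|\lambda_0|M$ lies in $\mathcal S$, witnessed by an $x$ with last components $h\in\mathcal H$. Step 1 then gives
\[
|\lambda_0|M\le|\lambda_0||q_h(\lambda_0)|\le|\lambda_0|M .
\]
Since $\lambda_0\ne0$, we get $|q_h(\lambda_0)|=M$, so $h$ attains $M$.

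The main obstacle is conceptual rather than technical: interpreting the statement correctly, with the supremum taken over the set $\mathcal S$ of attainable values, and keeping track of the fact that admissibility of $a_0$ depends only on $|a_0|$. This holds because Proposition~\ref{prop:corrected} constrains only the modulus. The rest is a direct combination of the earlier propositions.
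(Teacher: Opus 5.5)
Your proposal is correct and follows essentially the same route as the paper. The fixed-$h$ equivalence is read off from Proposition~\ref{prop:corrected}; the extremal value $|\lambda_0|M(\lambda_0;s_0,p_0)$ is the supremum over $\mathcal H(\lambda_0;s_0,p_0)$ of the radii $|\lambda_0||A_h(\lambda_0)/D_h(\lambda_0)|$; the Green-potential form comes from Proposition~\ref{prop:green} after pulling out the $h$-independent prefactor; and attainment is settled by the same two-sided squeeze. Your split into two inequalities (with $B(z)=z$ realising each radius) and your reading of the ``there exists $x$ such that $\sup\{\cdots\}$'' clause as a statement about the set of attainable moduli only make explicit what the paper's proof does implicitly.
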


\begin{proof}
Fix
$h=(s,p)\in\mathcal H(\lambda_0;s_0,p_0)$.
By Proposition~\ref{prop:corrected}, there
exists an analytic pentablock-valued
interpolant whose last two components
coincide with $h$ if and only if
\[
|a_0|
\leq
|\lambda_0|
\left|\frac{A_h(\lambda_0)}
{D_h(\lambda_0)}\right|.
\]
Furthermore, whenever this inequality
holds, the interpolant may be chosen
to be rational $\overline P$-inner.
Consequently, for each fixed $h$, the
admissible values of the first coordinate
form the closed disc
\[
\left\{
a_0\in\mathbb C:
|a_0|
\leq
|\lambda_0|
\left|\frac{A_h(\lambda_0)}
{D_h(\lambda_0)}\right|
\right\}.
\]
In particular, the largest admissible
modulus for this fixed $h$ is
\(
|\lambda_0|
\left|\frac{A_h(\lambda_0)}
{D_h(\lambda_0)}\right|.
\)(
Taking the supremum over all
$h\in\mathcal H(\lambda_0;s_0,p_0)$,
we obtain $x=(a,s,p):\mathbb D\to\overline P$ is analytic,
\[
\begin{aligned}
&\sup\biggl\{
|a_0|: x(0)=(0,0,0),\
x(\lambda_0)=(a_0,s_0,p_0),
(s,p)\in\mathcal H(\lambda_0;s_0,p_0)
\biggr\}\\
&=
|\lambda_0|
\sup_{h\in\mathcal H(\lambda_0;s_0,p_0)}
\left|\frac{A_h(\lambda_0)}
{D_h(\lambda_0)}\right|\\
&\qquad=
|\lambda_0|M(\lambda_0;s_0,p_0).
\end{aligned}
\]
By Proposition~\ref{prop:green},
\[
\left|\frac{A_h(\lambda_0)}
{D_h(\lambda_0)}\right|
=
\sqrt{1-\frac14|s_0|^2}\,
e^{-\mathcal G_s(\lambda_0)}.
\]
Since the factor
$\sqrt{1-\frac14|s_0|^2}$ is independent
of $h$, taking the supremum gives
\[
\begin{aligned}
M(\lambda_0;s_0,p_0)
&=
\sqrt{1-\frac14|s_0|^2}
\sup_{h\in\mathcal H(\lambda_0;s_0,p_0)}
e^{-\mathcal G_s(\lambda_0)}\\
&=
\sqrt{1-\frac14|s_0|^2}
\exp\left(
-\inf_{h\in\mathcal H(\lambda_0;s_0,p_0)}
\mathcal G_s(\lambda_0)
\right).
\end{aligned}
\]
Finally, suppose that the supremum of
the admissible moduli is attained by an
interpolant whose last two components
are $h_*\in\mathcal H(\lambda_0;s_0,p_0)$.
The fixed-component criterion then gives
\[
M(\lambda_0;s_0,p_0)
\leq
\left|\frac{A_{h_*}(\lambda_0)}
{D_{h_*}(\lambda_0)}\right|.
\]
The reverse inequality follows from the
definition of $M$, so $h_*$ attains the
supremum defining $M$.

Conversely, if some
$h_*\in\mathcal H(\lambda_0;s_0,p_0)$
attains this supremum, then
Proposition~\ref{prop:corrected} provides
a rational $\overline P$-inner interpolant
for every $a_0$ satisfying
\[
|a_0|
\leq
|\lambda_0|M(\lambda_0;s_0,p_0).
\]
In particular, an interpolant exists with
\(
|a_0|
=
|\lambda_0|M(\lambda_0;s_0,p_0).
\)
This proves the assertion concerning
attainment and completes the proof.
\end{proof}

\begin{rem}
\label{rem:variational-scope}
Proposition~\ref{prop:fixed-class} gives
an exact variational description of the
interpolation problem within the class
of analytic pentablock-valued maps whose
last two components form a rational
$\Gamma$-inner function.
It is important to distinguish this
restricted problem from the unrestricted
Schwarz interpolation problem for the
pentablock. Indeed, the last two
components of an arbitrary analytic map
from $\mathbb D$ into $\overline P$ need
not form a rational $\Gamma$-inner function.
Consequently, the extremal quantity
$M(\lambda_0;s_0,p_0)$ does not, by itself,
determine the full range of admissible
first-coordinate interpolation values.
Nevertheless, the variational formulation
identifies a natural extremal problem:
the maximization of the outer spectral
factor, or equivalently, the minimization
of the Green potential associated with
the first coordinate of a rational
$\Gamma$-inner interpolant.
Whether this restricted extremal problem
determines the unrestricted interpolation
problem requires further investigation.
\end{rem}
\noindent
\textbf{Open problem:}
Let $\lambda_0\in\mathbb D\setminus\{0\}$
and $x_0=(a_0,s_0,p_0)\in\overline P$.
Determine whether there exists an analytic
map
\(
x:\mathbb D\longrightarrow\overline P
\)
satisfying
\[
x(0)=(0,0,0),
\quad
x(\lambda_0)=(a_0,s_0,p_0)
\]
if and only if
\[
|s_0|<2,
\quad
\kappa(s_0,p_0)\leq|\lambda_0|,
\quad
|a_0|\leq|\lambda_0|F(s_0,p_0).
\]
The first two inequalities are necessary
by the Agler--Young Schwarz lemma for
the symmetrized bidisc, while the third
is necessary by
Proposition~\ref{prop:necessary}.
By Proposition~\ref{prop:s0zero},
these conditions are also sufficient
when $s_0=0$. Their sufficiency for
general $s_0$ remains to be determined.
A complete characterization of the
Schwarz interpolation problem for the
pentablock would also determine the
Lempert function
$\widetilde{k}_P(0,\cdot)$.
Such a characterization would provide
further insight into the relationship
between the Lempert function and the
Carath\'eodory distance on the pentablock,
a question considered by Agler, Lykova,
and Young \cite{ALY15}.

\section{AI Declarations:}
ChatGPT Go was used to assist in the development of Subsection 5.3 and to enhance the clarity, readability, and linguistic presentation of the manuscript. The authors take full responsibility for the mathematical content, accuracy, and conclusions of the work.

\end{document}